\newtheorem{maintheorem}{Theorem}
\newtheorem{theorem}{Theorem}[section]
\newtheorem{prop}[theorem]{Proposition}
\newtheorem{cor}[theorem]{Corollary}
\newtheorem{lemma}[theorem]{Lemma}
\numberwithin{equation}{section}
\DeclareMathOperator{\diag}{diag}
\DeclareMathOperator{\dvol}{dvol_g}
\DeclareMathOperator{\dvolh}{dvol_h}
\newcommand{\bR}{\mathbb{R}}
\newcommand{\bC}{\mathbb{C}}
\newcommand{\bB}{\mathbb{B}}
\newcommand{\bN}{\mathbb{N}}
\newcommand{\bS}{\mathbb{S}}
\newcommand{\cF}{\mathcal{F}}
\newcommand{\sE}{\mathcal{E}}
\newcommand{\sF}{\mathcal{F}}
\newcommand{\sH}{\mathcal{H}}
\newcommand{\sG}{\mathcal{G}}
\newcommand{\sA}{\mathcal{A}}
\newcommand{\sQ}{\mathcal{Q}}
\newcommand{\sZ}{\mathcal{Z}}
\newcommand{\sM}{\mathcal{M}}
\newcommand{\Mbar}{\overline{M}}
\newcommand{\gbar}{\overline{g}}
\newcommand{\Mdel}{\partial M}
\newcommand{\wt}{\widetilde{w}}
\newcommand{\vt}{\widetilde{v}}
\newcommand{\vep}{\varepsilon}
\renewcommand{\hbar}{\overline{h}}
\newcommand{\ghat}{\hat{g}}
\newcommand{\Rh}{\widetilde{R}}
\newcommand{\Rch}{\widetilde{Rc}}
\newcommand{\nablah}{\widetilde{\nabla}}
\begin{document}
\title[Conformally compact Ricci flow]{Ricci flow of conformally compact metrics}

\keywords{Ricci flow, conformally compact metrics, asymptotically hyperbolic metrics}

\subjclass[2000]{53C44, 58J35, 35K40, 35K59}
\author{Eric Bahuaud}
\address{Department of Mathematics,
Stanford University,
California 94305,
USA}

\email{bahuaud (at) math.stanford.edu}
\urladdr{http://math.stanford.edu/~bahuaud/}

\begin{abstract} In this paper we prove that given a smoothly conformally compact asymptotically hyperbolic metric there is a short-time solution to the Ricci flow that remains smoothly conformally compact and asymptotically hyperbolic.  We adapt recent results of Schn\"urer, Schulze and Simon to prove a stability result for conformally compact Einstein metrics sufficiently close to the hyperbolic metric.
\end{abstract}

\maketitle

\section{Introduction}

In 1989, W. X. Shi initiated the study of the Ricci flow on a noncompact manifold by proving that there is a short-time solution to the flow starting at a complete metric of bounded curvature, and moreover the flow remains in this class.  Recently there has been intense activity to understand to what extent the Ricci flow preserves other geometric conditions on noncompact manifolds, see \cite{AAR-cusp, Bamler, JMN-cusp, HQS, IMS-ac, MaXu, Woolgar} for examples.  In this paper we prove that the Ricci flow preserves the set of smoothly conformally compact asymptotically hyperbolic metrics in general dimension for a short time.  We begin by introducing these metrics.

Let $M^{n+1}$ be the interior of a compact manifold with boundary $\Mbar$.  Suppose that $x$ is a boundary defining function for $\partial M$.  This is to say that $x$ is a smooth non-negative function on $\Mbar$ that vanishes to first order precisely at $\partial M$.  We say that a metric $h$ is smoothly \textit{conformally compact} if $\hbar := x^2 h$ extends to be smooth metric on $\Mbar$.  The Poincar\'e ball model of hyperbolic space provides an easy example.

When $|dx|^2_{\hbar} = 1$ on $\partial M$, we may use $x$ to identify a collar neighbourhood of $\partial M$ in $\Mbar$ with $[0,\epsilon) \times \partial M$.  We then write $h$ as
\[ h = \frac{dx^2 + \hat{h}(x)}{x^2}, \]
for a smooth family of metrics $\hat{h}$ on $\partial M$.

If $h$ is smoothly conformally compact, with $|dx|^2_{\hbar}=1$ on $\partial M$, and $(R^{cc})_{ijkl} = h_{il} h_{jk} - h_{ik} h_{jl}$ denotes the curvature $4$-tensor of constant sectional curvature $+1$, then the curvature $4$-tensor $R$ of $h$ satisfies
\begin{align*}
 |R+R^{cc}|_h &= O(x), \; \mbox{and} \\
 |\nabla^{(j)}_h R|_h &= O(x), \; \mbox{for all} \; j. 
\end{align*}
For this reason conformally compact metrics with $|dx|^2_{\hbar} = 1$ on $\partial M$ are asymptotically hyperbolic.  It is well known that $h$ is complete and of bounded geometry.  

Recall the Ricci flow is the system of equations
\begin{equation} \label{RF} \left\{ \begin{array}{ll} \partial_{\tau} g &= - 2 Rc\; g(\tau), \\
                            g (0) &= h. \end{array} \right. \end{equation}
As previously mentioned, it follows from \cite{Shi} that there is a solution to the Ricci flow, $g(\tau)$, with initial metric $h$ for a short time.
                            
It will be more convenient to study a normalized Ricci flow.  Suppose that $g^N$ satisfies:
\begin{equation} \label{NRF} \left\{ \begin{array}{ll} \partial_{t} g^N_{ij} &= -2ng^N_{ij} - 2 Rc\; g^N_{ij}, \\
                            g^N(0) &= h. \end{array} \right. 
\end{equation}       
Setting $g(x, \tau) = (1+2n\tau) g^N( x, \frac{1}{2n} \log( 1+ 2n \tau) )$ yields a solution to the original Ricci flow.  As solutions to the Ricci flow and normalized Ricci flow differ by this time rescaling, we see that spatial regularity is preserved.  Thus a conformally compact and asymptotically hyperbolic solution to the normalized Ricci flow yields a conformally compact solution to the Ricci flow, with sectional curvatures that depend on time.  Moreover it is straightforward to check that the conformal infinity is preserved along the flow.

The first main result of this paper is the following
\begin{maintheorem} \label{theorem:main}
If $h$ is smoothly conformally compact and asymptotically hyperbolic then there exists a unique smoothly conformally compact and asymptotically hyperbolic solution $g(t)$ to \eqref{NRF} (and hence a conformally compact solution to \eqref{RF}) for a short time.
\end{maintheorem}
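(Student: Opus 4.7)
My plan is to reduce \eqref{NRF} to a strictly parabolic system via the DeTurck trick with the initial metric $h$ as background, solve the resulting quasilinear equation in Hölder spaces adapted to the asymptotically hyperbolic geometry, and then recover an honest Ricci flow solution by pulling back along the diffeomorphism generated by the DeTurck vector field. Explicitly, letting $\tilde{\Gamma}$ denote the Christoffel symbols of $h$ and setting $W^k(g) = g^{pq}(\Gamma^k_{pq}(g)-\tilde{\Gamma}^k_{pq})$, I would study the normalized Ricci--DeTurck flow
\[ \partial_t g = -2ng - 2Rc(g) + \cL_W g, \qquad g(0) = h, \]
which is quasilinear strictly parabolic with principal symbol the rough Laplacian acting on symmetric $2$-tensors.

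The heart of the argument is solving this equation on the compactification $\Mbar$ and showing that the solution stays smoothly conformally compact. Because $|dx|^2_{\hbar}=1$ on $\partial M$, the Laplacian of $h$ is a uniformly degenerate (``0--type'') operator on $\Mbar$: in the collar $[0,\epsilon)_x\times\partial M$ it has the form $(x\partial_x)^2 + x^2 \Delta_{\hat{h}(x)} + \text{l.o.t.}$ Writing $g = h+u$ turns the Ricci--DeTurck system into a quasilinear 0--parabolic equation for $u$ on $\Mbar \times [0,T]$. I would work in 0--Hölder spaces $\mathcal{C}^{k,\alpha}_0(\Mbar)$, whose derivatives are measured against the 0--vector fields $x\partial_x, x\partial_{y^i}$ and in which smoothly conformally compact tensors naturally live. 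After establishing parabolic Schauder estimates for the linearization---the heat equation analogue of the 0--elliptic theory of Mazzeo, Graham and Lee---a standard fixed-point argument on a small ball and short time interval yields a solution $g(t)$ for which $x^2 g(t)$ extends smoothly to $\Mbar$ at each time. The normalization term $-2ng$ shifts the indicial roots of the linearization at $\partial M$ in a way that is essential for invertibility on the 0--Hölder scale.

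To recover a genuine solution to \eqref{NRF} I would solve the harmonic map heat flow $\partial_t\varphi_t = \Delta_{g(t),h}\varphi_t$ with $\varphi_0 = \text{id}$, itself a 0--parabolic system handled by the same framework. The resulting diffeomorphisms $\varphi_t$ extend smoothly to $\Mbar$ with $\varphi_t|_{\partial M} = \text{id}$, so $g^N(t):=\varphi_t^* g(t)$ solves \eqref{NRF} and inherits the smoothly conformally compact and asymptotically hyperbolic structure. Uniqueness in this class follows from uniqueness of the Ricci--DeTurck flow together with the standard bijection between Ricci flow and Ricci--DeTurck solutions via harmonic map heat flow, and the time rescaling recorded in the introduction converts everything to a solution of \eqref{RF}.

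The main obstacle is the linear parabolic theory underlying the second step: the boundary $\partial M$ is characteristic for the 0--parabolic operator, so classical Schauder estimates do not apply off the shelf, and one must carefully set up the heat-equation analogue of the weighted 0--elliptic calculus on $\Mbar$. Once that calculus is in hand, verifying that the quadratic nonlinearities of Ricci--DeTurck and the gauge terms respect the 0--Hölder structure is delicate but algebraic; the conceptual work lies entirely in the linear degenerate parabolic estimates and the correct choice of weighted function spaces.
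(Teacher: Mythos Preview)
Your outline follows the same DeTurck-plus-contraction strategy as the paper, but there is a genuine gap in the regularity step. You assert that the fixed-point argument in $0$-H\"older spaces ``yields a solution $g(t)$ for which $x^2 g(t)$ extends smoothly to $\Mbar$,'' but this does not follow: the $0$-H\"older spaces $C^{k+a,\frac{k+a}{2}}_e(M_T)$ are built from the vector fields $x\partial_x, x\partial_{y^i}$ and carry \emph{no tangential regularity} at $\partial M$. A function can lie in $C^{\infty,\infty}_e(M_T)$ while $\partial_{y^i}$ of it blows up at $x=0$. The contraction mapping therefore only produces a solution in $C^{\infty,\infty}_e(M_T)$, and upgrading this to $C^\infty(\Mbar_T)$ is a separate, substantial argument. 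In the paper this occupies all of Section~\ref{section:regularity}: one first shows full $b$-regularity by exploiting the extra decay of the quadratic terms $\sQ$ (each application of $\sQ$ gains a power of $x$, which can be traded for a tangential derivative), and then iterates the fixed-point equation together with the polyhomogeneity-preserving mapping properties of the heat operator (Proposition~\ref{prop:polyhomog}) to peel off a smooth expansion at $\partial M$. Your proposal does not account for this, and without it the solution is not known to be smoothly conformally compact.

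Two smaller points. First, your remark that the $-2ng$ term ``shifts the indicial roots \ldots in a way that is essential for invertibility'' is misplaced here: the short-time parabolic Schauder estimate (Theorem~\ref{thm:schauder}) holds for any second-order uniformly degenerate elliptic $L$ and does not rely on indicial-root conditions; invertibility in the elliptic sense is not what is being used. Second, to pass from Ricci--DeTurck back to Ricci flow you invoke the harmonic map heat flow, which is another $0$-parabolic system requiring the same machinery. The paper takes the simpler route: once $g(t)$ is known to be smoothly conformally compact, the DeTurck vector field $W$ has coefficients in $C^\infty(\Mbar)$ vanishing to first order at $\partial M$, so its ODE flow $\phi_t$ is a smooth diffeomorphism of $\Mbar$ fixing the boundary, and $\phi_t^* g$ solves \eqref{NRF} directly. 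Uniqueness is then quoted from Chen--Zhu rather than argued via the DeTurck bijection.
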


The proof of this Theorem proceeds as follows.  First we apply the DeTurck trick to obtain a system that may be solved by parabolic PDE techniques.  Then conditioning the equation appropriately we are able to apply a contraction mapping argument to reprove the existence (see Theorem \ref{theorem:existence}) of a short-time solution to the flow in $0$-H\"older spaces, which are H\"older spaces associated to conformally compact metrics that respect the interior geometry.  These spaces have no tangential regularity at the boundary, so the final part of the argument (see Theorem \ref{theorem:reg}) proves that the solution is smoothly conformally compact by applying regularity techniques modeled on \cite{MazzeoYamabe}.  We have stated our results for smoothly conformally compact metrics, but the arguments here extend to certain initial metrics that are polyhomogeneous.

Given short-time existence for the Ricci flow it is natural to study the stability of the flow about fixed points.  In general dimension, stability of hyperbolic space under the Ricci flow has been studied independently by Li and Yin \cite{LiYin}, Schn\"urer, Schulze and Simon \cite{SSS}, and Bamler \cite{Bamler}.  The second main result of this paper extends the stability result of Schn\"urer, Schulze and Simon to certain small Einstein perturbations of the hyperbolic metric.  By an $\eta$-admissible Einstein metric we mean an at least $C^{3,\alpha}$ conformally compact Einstein metric $h$ on $\bB^{n+1}$ that satisfies both a global curvature bound
\[ \sup_{\bB^{n+1}} |R + R^{cc}|_h \leq \eta, \]
and that the Yamabe invariant of its conformal infinity is positive.  Note the existence of such metrics follows from work of Graham and Lee \cite{GrahamLee}.  In particular any smooth Riemannian metric $\hat{h}$ on $\bS^n$ sufficiently close to the round metric in an appropriate $C^{k,\alpha}$ norm is the conformal infinity of a conformally compact Einstein metric $h$ on the unit ball.  

Before stating our second main result, we introduce the function spaces defined in \cite{SSS}.  For an interval $I \subset [0,\infty)$, let $\sM^k( \bB^{n+1} , I )$ (resp. $\sM^k_{loc}( \bB^{n+1} , I )$) denote the space of sections $g(t)$, $t \in I$ of metrics on $\bB^{n+1}$ which are $C^k$ (resp. $C^k_{loc}$) on $\bB^{n+1} \times I$, with covariant derivatives taken with respect to $h$.  The space $\sM^{\infty}_0( \bB^{n+1} , I )$ will denote metrics in $\sM^0(\bB^{n+1}, I) \cap \sM^0_{loc}(\bB^{n+1}, I)$ which are smooth for positive times and uniformly bounded in $C^k$ when restricted to time intervals of the form $[\delta, \infty)$, $\delta > 0$.

Adapting the work of Schn\"urer, Schulze and Simon we prove
\begin{maintheorem} \label{maintheorem2} Let $n \geq 3$.  There exists an $\eta(n) > 0$ so that for any $\eta$-admissible Einstein metric with $0 < \eta < \eta(n)$ the following holds.  For all $K > 0$ there exists $\epsilon_1 = \epsilon_1(n, K) > 0$ where if $g_0$ is a $\sM^0$ metric close to $h$ in the sense that  
\[ \int_{\bB^{n+1}} |g_0 - h|^2_h dvol_h \leq K, \]
and
\[ \sup_{\bB^{n+1}} |g_0 - h|_h \leq \epsilon_1. \]
Then there exists a long-time solution $g(t) \in \sM^{\infty}_0( \bB^{n+1} , [0,\infty) )$ to the normalized Ricci-DeTurck flow (with initial metric $g_0$) such that
\[ \sup_{\bB^{n+1}} |g(t) - h|_h \leq C(n, K) e^{-\frac{1}{4(n+3)} t}. \]
Moreover, $g(t) \longrightarrow h$ exponentially in $C^k$ as $t \rightarrow \infty$, for all $k \in \bN$.
\end{maintheorem}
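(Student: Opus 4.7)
The plan is to follow the scheme of Schn\"urer, Schulze, and Simon (hereafter SSS) with the background hyperbolic metric replaced by the $\eta$-admissible Einstein metric $h$, treating the deviation of $h$ from $\bH^{n+1}$ perturbatively. Working with the normalized Ricci--DeTurck flow about $h$ and setting $w := g - h$, the fact that $h$ is Einstein with $\Rc(h) = -nh$ reduces the evolution to a quasilinear equation of the schematic form
\[
\partial_t w = \Delta_h w + 2\,\mathring{R}_h \cdot w + Q(w, \nabla_h w),
\]
where $\mathring{R}_h$ is the curvature operator of $h$ acting on symmetric $2$-tensors and $Q$ is at least quadratic in $(w, \nabla_h w)$ with coefficients smooth in $h^{-1} + O(w)$. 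Short-time existence of a DeTurck solution issuing from an $\sM^0$ datum $g_0$ with $|g_0 - h|_h \leq \vep_1$ follows verbatim from the SSS approximation-by-smooth construction, whose parabolicity and maximum principle estimates depend only on the pointwise smallness of $w$.

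The core of the argument is a global $L^2$-energy estimate for $|w|_h^2$. The SSS estimate on $\bH^{n+1}$ rests on two properties: the bottom of the $L^2$-spectrum of $-\Delta_{\bH^{n+1}}$ equals $(n/2)^2$, and the hyperbolic curvature operator $R^{cc}$ acts on symmetric $2$-tensors with explicit constants. Both facts survive in the $\eta$-admissible setting. The spectral gap $\lambda_0(-\Delta_h) \geq (n/2)^2$ with no $L^2$-eigenvalues below is exactly J. Lee's theorem on the spectrum of an asymptotically hyperbolic Einstein manifold with positive Yamabe invariant of the conformal infinity -- one of the two defining properties of $\eta$-admissibility. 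The curvature operator satisfies $|\mathring{R}_h - \mathring{R}^{cc}|_h \leq C(n)\eta$ by the global bound $|R + R^{cc}|_h \leq \eta$ -- the other defining property.

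Combining these ingredients with a careful integration by parts -- justified by approximation of $w$ by compactly supported symmetric $2$-tensors using the uniform $L^\infty$ bound -- yields
\[
\frac{d}{dt}\,\frac{1}{2}\int_{\bB^{n+1}} |w|_h^2 \, \dvolh \leq -\bigl(c(n) - C(n)(\vep_1 + \eta)\bigr) \int_{\bB^{n+1}} |w|_h^2 \, \dvolh,
\]
for an explicit $c(n) > 0$ emerging from the spectral gap and the pointwise sign of $\mathring{R}^{cc}$ on symmetric $2$-tensors. Choosing $\eta(n)$ and $\vep_1(n,K)$ sufficiently small gives exponential $L^2$-decay of $\|w(t)\|_{L^2}$. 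The Moser iteration of SSS, which uses only the bounded geometry of $(\bB^{n+1}, h)$ (uniform local Sobolev inequality and volume doubling), then upgrades this to the announced $L^\infty$-rate $e^{-t/(4(n+3))}$. Smoothness and exponential $C^k$-decay on each time interval $[\delta, \infty)$ follow from standard interior parabolic regularity applied to the time-shifted flow, bootstrapped against the $L^\infty$ estimate.

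The principal obstacle is the $L^2$-step: one must verify that Lee's spectral gap and the curvature perturbation bound fit together so that the SSS energy identity still closes with a strictly negative coefficient after the background is perturbed away from $\bH^{n+1}$. The definition of $\eta$-admissibility is tailored precisely for this -- positive Yamabe of the conformal infinity supplies the spectral gap, while $|R + R^{cc}|_h \leq \eta$ supplies the smallness needed to absorb the curvature-operator perturbation into that gap -- so the main technical content reduces to tracking constants carefully enough to identify a workable threshold $\eta(n)$.
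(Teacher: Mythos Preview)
Your proposal is correct and follows essentially the same route as the paper: adapt SSS with $h$ an $\eta$-admissible Einstein metric in place of $\bH^{n+1}$, use Lee's spectral gap $\lambda_0(-\Delta_h)=n^2/4$ (guaranteed by positivity of the Yamabe invariant of the conformal infinity), control the curvature deviation by $\eta$, close the $L^2$-energy inequality with a strictly negative coefficient for $n\geq 3$, and then invoke the SSS iteration and interior parabolic regularity for the $L^\infty$ and $C^k$ conclusions.

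Two small differences in emphasis are worth noting. First, the paper derives a \emph{pointwise} scalar differential inequality for $|g-h|_h^2$ (its Lemma replacing \cite[Lemma 2.2]{SSS}) by diagonalizing $g$ in $h$-normal coordinates, and then integrates and applies Kato's inequality $|\nablah|Z||^2\leq|\nablah Z|^2$ to reduce to the \emph{scalar} spectral gap; you phrase things more directly in terms of the integrated energy and the curvature operator on symmetric $2$-tensors. Second, in that pointwise computation the Einstein condition enters not just through $\Rc(h)=-nh$ in the evolution equation but via the trace identity $\sum_j E_{ijji}=0$, which is precisely what converts the curvature-error contribution from a term linear in $(\lambda_i-1)$ into one quadratic in $(\lambda_i-1)(\lambda_j-1)$ and hence bounded by $b(n)\eta|g-h|^2$. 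Your abstract bound $|\mathring R_h-\mathring R^{cc}|_h\leq C(n)\eta$ achieves the same effect, but the explicit cancellation is the mechanism the paper isolates.
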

In the above theorem, unlike in \cite{SSS}, we have transcribed the dimension to $n+1$ to match the convention of the rest of the paper.  Note also that in view of the main regularity result of this paper, if $g_0$ is smoothly conformally compact, then $g(t)$ remains smoothly conformally compact for finite time.  The limiting conformally compact Einstein metric need not be smoothly conformally compact.

We take this opportunity to mention two related papers.  First, recent work by Hu, Qing and Shi \cite{HQS} proves the Ricci flow preserves a certain class of asymptotically hyperbolic metrics for a short-time.  These metrics are defined by curvature decay conditions and, as shown in \cite{BahuaudGicquaud} and \cite{HQS}, are conformally compact of only a limited regularity.  Hu, Qing and Shi subsequently prove an interesting rigidity result.  On the other hand, in view of the applications of smoothly conformally compact metrics to geometry and physics (see for example \cite{Biquard} and references therein), it is natural to study the Ricci flow in the smooth conformally compact setting.  Second, the author and Helliwell have recently proved short-time existence results for higher-order geometric flows on compact manifolds \cite{BahuaudHelliwell}.  We observed that many short-time existence results depend only on the special algebraic structure of the flow.  Both \cite{BahuaudHelliwell} and the present paper were developed in parallel, and were inspired by recent work of Koch and Lamm \cite{KochLamm}.  The short-time existence of the Ricci flow we give here, while in the setting of conformally compact metrics, may be regarded as a concrete application of the ideas in \cite{BahuaudHelliwell}.
 
After the present paper was accepted for publication Qing, Shi and Wu \cite {QSW} posted a preprint that also studies the Ricci flow of conformally compact Einstein metrics.  Their technique differs from the one presented here in that they use maximum principles to obtain regularity along the flow.  They are also able to prove interesting perturbation results for conformally compact Einstein metrics.

This paper is structured as follows.  In Section \ref{section:prelim}, we outline the DeTurck trick and reduction of the flow to a parabolic system.  In Section \ref{section:parabolic}, we define function spaces and outline the main results from linear parabolic theory on conformally compact manifolds.  This theory is based on the edge and heat calculus for $0$-operators that appears in \cite{Mazzeo} and \cite{Albin}.  In order to not distract from the main Ricci flow argument, we have kept this section short and instead sketched several of the proofs of the analytic results in the Appendix.  In Section \ref{section:ste}, we condition the flow equations and provide the contraction mapping argument.  We discuss the regularity argument in Section \ref{section:regularity}, and the stability argument in Section \ref{section:stability}.  Finally, in the Appendix we provide sketches for the various analytic facts quoted in Section \ref{section:parabolic}.

It is a pleasure to thank Rafe Mazzeo for ideas and many helpful discussions during the course of this work.  I am also in debt to Pierre Albin, Dean Baskin, and Andr\'as Vasy for useful conversations, and Robin Graham and Fr\'ed\'eric Rochon for pointing out typos in an earlier draft.  Finally I would like to thank my collaborators in related projects, Emily Dryden, Dylan Helliwell and Boris Vertman as their input has greatly shaped my understanding and this paper.

\section{Preliminaries} \label{section:prelim}

As is well known, the Ricci flow is not a parabolic system due to the diffeomorphism invariance of the Ricci tensor.  We will break this invariance using the standard DeTurck trick.  Choosing the initial smoothly conformally compact and asymptotically hyperbolic metric $h$ as the background metric, and writing all Christoffel symbols and curvature quantities with respect this metric with tildes, we define a time dependent vector field
\[ W^k = g^{pq} \left( \Gamma^k_{pq} - \widetilde{ \Gamma^k_{pq} } \right). \]

The normalized Ricci-DeTurck flow is given by
\begin{equation} \label{NRDTF} \left\{ \begin{array}{ll} \partial_{t} g_{ij} &= -2ng_{ij} - 2 Rc\; g_{ij} + \nabla_i W_j + \nabla_j W_i, \\
                            g(0)_{ij} &= h_{ij}. \end{array} \right. 
\end{equation}  

Standard computations, for example given in \cite{Shi}, show that this flow may be written
\begin{equation} \label{NRDTF2} \left\{ \begin{array}{ll} 0 &= \partial_t g_{ij} - g^{ab} \nablah_a \nablah_b g_{ij} + 2n g_{ij} + g^{ab} g_{ip} h^{pq} \Rh_{jaqb} +  g^{ab} g_{jp} h^{pq} \Rh_{iaqb}  \\
 &- \frac{1}{2} g^{ab} g^{pq} \left( \nablah_i g_{pa} \nablah_j g_{qb} + 2 \nablah_a g_{jp} \nablah_q g_{ib} - 2 \nablah_a g_{jp} \nablah_b g_{iq} \right. \\
 &- \left. 2 \nablah_j g_{pa} \nablah_b g_{iq} - 2 \nablah_i g_{pa} \nablah_b g_{jq} \right) ,\\
g(0) &= h. \end{array} \right.
\end{equation}
From this equation we see the Ricci-DeTurck flow is a quasilinear parabolic system for the metric.

Once we prove short-time existence of a smoothly conformally compact solution $g$ to the Ricci-DeTurck flow, the time-dependent vector field $W^k$ will have coefficients smooth up to the boundary of $\Mbar$ and vanishing to first order there.  If $\phi_t$ denotes the flow generated by $W^k$, then $\ghat = \phi_t^* g$ is a solution to the normalized Ricci flow \cite{Chow}.  It is straightforward to see that $\ghat$ is smoothly conformally compact and asymptotically hyperbolic.

Finally, we only prove the existence of a short-time solution to the Ricci flow.  The uniqueness assertion in Theorem \ref{theorem:main} follows from the work of Chen and Zhu \cite{ChenZhu}. 

\section{Parabolic theory on conformally compact spaces} \label{section:parabolic}

In this section we outline linear parabolic theory for uniformly degenerate operators on conformally compact manifolds.  We just state the results we need here; sketches of proofs are deferred to the appendix.  The primary references for the material in this section are \cite{Mazzeo} and \cite{Albin}.

Let $(M,h)$ be a smoothly conformally compact asymptotically hyperbolic manifold as defined in the introduction.  Suppose that $x$ is a boundary defining function and that $\{ y^1, \cdots, y^n \}$ are coordinates on the boundary, extended to be constant in $x$.  We will refer to these coordinates as background coordinates.  Note that in order to avoid additional notation we will occasionally use the convention $x^0 = x, x^i = y^i, i = 1 \cdots n$ to generically refer to the coordinates as $x^i$ when we do not need to keep track of normal/tangential directions.  The metric $h$ decomposes as
\[ h = \frac{dx^2 + \hat{h}_{ab}(x,y) dy^a dy^b}{x^2}, \]
where the components of $\hat{h}$ are smooth up to the boundary. 

The \textit{$0$-vector fields} are generated by 
\[ \left\{ x \partial_x, x \partial_{y^1}, \cdots, x \partial_{y^n} \right\}, \]
and form the basis of a vector bundle, the $0$-tangent bundle ${}^0 TM$.  We will also have occasion to discuss \textit{$b$-vector fields}, which are generated by
\[ \left\{ x \partial_x, \partial_{y^1}, \cdots, \partial_{y^n} \right\}. \]

We will denote the space of smooth functions on $M$ by $C^{\infty}(M)$ and functions smooth up to the boundary by $C^{\infty}(\Mbar)$.  The vector bundle of symmetric $2$-tensors on $M$ will be denoted $\Sigma^2(M)$.  We will use $\frac{dx}{x}$ and $\frac{dy^b}{x}$ as the preferred basis for this bundle.

An operator $L$ on functions is \textit{uniformly degenerate} of order $m$ if in local coordinates it is given by:
\[ L = \sum_{j + |\beta| \leq m} a_{j,\beta}(x,y,t) (x \partial_x)^j (x \partial_y)^{\beta}. \]
where the coefficients $a_{j,\beta}$ are at least continuous up to the boundary.  In order to use Albin's heat calculus, we require that $a_{j, \beta}$ be smooth up to the boundary and independent of time.

The principal symbol of a uniformly degenerate operator $L$ is a homogeneous polynomial on ${}^0 T^*M$ given by
\[ {}^0 \sigma(L)( \xi, \eta) = \sum _{j + |\beta| = m} a_{j,\beta} \xi^j \eta^{\beta}. \]
We say that $L$ is \textit{elliptic} if ${}^0 \sigma(L)$ is invertible away from $(\xi, \eta) = 0$.

For the Ricci flow analysis, we will have to deal with systems of equations as our operators will act on the vector bundle of symmetric two tensors.  An operator between tensor bundles $E$ and $F$ is uniformly degenerate if in local coordinates it may be written as a system:
\[ (L u)_i = \sum_{j + |\beta| \leq m} (a_{j,\beta})_i^k (x \partial_x)^j (x \partial_y)^{\beta} u_k. \]
where the coefficients $a_{j,\beta}$ are now entries of a $\dim F \times \dim E$ matrix that is at least continuous up to the boundary.  The principal symbol is defined as before.  We will not need to consider the most general notions of ellipticity for systems as the Ricci flow system \eqref{NRDTF2} is `diagonal' at top order, i.e. $(a_{j,\beta})^{i}_k = (a_{j,\beta}) \cdot \delta^i_k$.  From this we can see that all coupling occurs at lower order.   We now say that $L$ is elliptic if $\dim F = \dim E$ and the symbol is invertible away from $(\xi,\eta) = 0$.

For the remainder of this section we suppose that $L$ is a second order uniformly degenerate elliptic operator with diagonal principal symbol.

\subsection{Function spaces} \label{section:functionspaces}

We work in the $0$-H\"older spaces defined for example in \cite{Lee, Mazzeo, MazzeoPacard}.  We describe the anisotropic version of these H\"older function spaces, and refer the reader to the references for the purely spatial version.  For any manifold $M$, the notation $M_T$ will denote the cylinder $M \times [0,T)$.  Fix a smoothly conformally compact metric $h$, which in the Ricci flow analysis, will be the initial metric.  We assume a covering of $\Mbar$ by background coordinates has been fixed.

Cover $M$ by a Whitney decomposition of countably many uniformly locally finite coordinate balls $B_i$ with centre $(x_i, y_i)$ and radius $\frac{1}{2} x_i$.  We will consider the product of each ball with a time interval $[0,T)$.  For any $0 < a < 1$, consider the norm
\begin{align*}
|| u ||_{a,\frac{a}{2}} &:= ||u||_{\infty} + \sup_{i} \left\{ \sup_{(x,y,t)\neq (x',y',t) \in (B_i)_T}  \frac{ (x+x')^{a} \left| u(x,y,t) - u(x',y',t)\right| }{|x-x'|^{a} + |y-y'|^{a}} \right. \\
& \hspace{1 in} + \left. \sup_{(x,y,t)\neq(x,y,t') \in (B_i)_T}  \frac{ \left| u(x,y,t) - u(x,y,t') \right| }{|t-t'|^{a/2}} \right\}.
\end{align*}
The prefactor $x+x'$ comes from using the euclidean metric in background coordinates instead of the intrinsic $g$-distance, see \cite{MazzeoPacard}.  Note that we may also use an affine map $\psi_i: B_T \to (B_i)_T$ from a fixed standard cylinder $B_T$ to define these norms.

Let $C^{a,\frac{a}{2}}_e(M_T)$ be the closure of $C^{\infty}(\Mbar_T)$ with respect to this norm.  We define $C^{k+a,\frac{k+a}{2}}_e(M_T)$ to consist of all functions $u$ such that $(\partial_t)^i (x \partial_x )^j (x\partial_y)^{\beta} u \in C_e^{a,\frac{a}{2}}(M_T)$ for all $2i + j + |\beta| \leq k$.  Note that unlike $C^{\infty}(\Mbar_T)$, the spaces $C^{k+a,\frac{k+a}{2}}_e(M_T)$ and even $C^{\infty, \infty}_e(M_T)$ have no tangential regularity at the boundary.  We also weight these spaces: $u \in x^{\nu} C_e^{k+a,\frac{k+a}{2}}(M_T)$ if and only if $u = x^{\nu} v$ for some $v \in C_e^{k+a,\frac{k+a}{2}}(M_T)$. 

We will also need H\"older spaces of tensors.  As previously stated, we use the vector fields $x \partial_{x}$ and $x \partial_{y^{b}}$ and covector fields $dx/x$ and $dy^{b}/{x}$ as a basis for bundles of tensors, and with this convention $\nabla^h$ involves only derivatives by the $0$-vector fields.  In this way a section of a tensor bundle is an element of a H\"older space if and only if its components are.  Furthermore, for $j \leq k$ 
\[ (\nabla^h)^j: x^{\nu} C_e^{k+a,\frac{k+a}{2}}(M_T; E) \longrightarrow x^{\nu} C_e^{a,\frac{a}{2}}(M_T; E \otimes {}^0 T^*M). \]
Finally, in what follows since we always deal with the bundle of symmetric $2$-tensors, we will not explicitly mention it in the notation.

In our final regularity argument we will need weighted H\"older spaces that allow for tangential regularity.  Following \cite{MazzeoTaylor} we introduce a scale of spaces $C^{k+a, \frac{k+a}{2}, l}(M_T)$ which consists of elements $u \in C_e^{k+a,\frac{k+a}{2}}(M_T)$ such that $\partial_y^s u \in C_e^{k-s+a,\frac{k-s+a}{2}}(M_T)$ for $0 \leq s \leq l$.  This is to say that up to $l$ of the $x \partial_y$ $0$-derivatives may be replaced by the tangential $\partial_y$ $b$-derivatives.  We weight these spaces as before.  Note that $C^{k+a, \frac{k+a}{2}, 0}(M_T) = C_e^{k+a,\frac{k+a}{2}}(M_T)$, the $0$-H\"older space, and that $C^{k+a, \frac{k+a}{2}, k}(M_T)$ is a H\"older space of $k$ $b$-derivatives.

In \cite{Lee, Mazzeo}, elliptic estimates in $0$-H\"older spaces are proved from scaling and classical interior elliptic estimates on the balls $B_i$, as the pullback of a uniformly degenerate elliptic operator under $\psi_i$ becomes uniformly elliptic.  Similarly we may obtain parabolic estimates from scaling and classical parabolic estimates.  In particular we have the following regularity result, see \cite[Theorem 8.11.1, Theorem 8.12.1]{Krylov} for the classical parabolic statements.
\begin{prop}[Parabolic regularity] \label{parabolicregularity}  Let $L$ be a second order uniformly degenerate elliptic operator.  Suppose that $D^{\gamma} a_{j,\beta} \in C^{a,\frac{a}{2}}_e(M_T)$ for $|\gamma| \leq k$, and $D^{\gamma} f \in C^{a,\frac{a}{2}}_e(M_T)$, $D^{\gamma} \phi \in C^{a}_e(M)$ for all $|\gamma| \leq k$.  If $u \in C^{2+a, \frac{2+a}{2}}_e(M_T)$ is a solution to $(\partial_t - L) u(\zeta,t) = f(\zeta,t)$ then $D^{\gamma} u \in C^{2+a,\frac{2+a}{2}}_e(M_T)$ for all $|\gamma|\leq k$.
\end{prop}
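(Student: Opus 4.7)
The plan is to reduce to the classical interior parabolic Schauder estimates by localizing on the Whitney cover and pulling back by the affine maps $\psi_i : B_T \to (B_i)_T$ already used to define the $0$-H\"older norms. I would argue by induction on $k$; the case $k=0$ is the hypothesis on $u$.

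For the inductive step, fix a ball $B_i$ and compose the equation $(\partial_t - L)u = f$ with $\psi_i$, leaving time unchanged. Writing $\psi_i(\bar x, \bar y) = (x_i + \tfrac{x_i}{2}\bar x, y_i + \tfrac{x_i}{2}\bar y)$, a direct computation gives $\psi_i^*(x\partial_x) = (2+\bar x)\partial_{\bar x}$ and $\psi_i^*(x\partial_{y^a}) = (2+\bar x)\partial_{\bar y^a}$, so each $0$-vector field pulls back to a vector field uniformly comparable to a Euclidean coordinate derivative on the fixed standard ball $B$. Consequently the pulled-back operator $\widetilde L_i := \psi_i^* L$ is classically uniformly elliptic on $B$ with constants independent of $i$, and $\partial_t - \widetilde L_i$ is uniformly parabolic on $B_T$. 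The factor $(x+x')^a \sim x_i^a$ in the $0$-H\"older seminorm exactly cancels the Euclidean scaling factor $(x_i/2)^a$, so the coefficients of $\widetilde L_i$, the pullback $\tilde f := f \circ \psi_i$, and the pullback $\tilde \phi := \phi \circ \psi_i$ are uniformly bounded in $i$ in the standard H\"older spaces $C^{k+a,(k+a)/2}(B_T)$ and $C^{k+a}(B)$ in terms of their $0$-H\"older counterparts on $M_T$ and $M$.

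Applying the classical parabolic Schauder estimates \cite[Theorems 8.11.1 and 8.12.1]{Krylov} to $(\partial_t - \widetilde L_i)\tilde u = \tilde f$ with initial data $\tilde\phi$ then yields $\tilde u \in C^{k+2+a,(k+2+a)/2}(B_T)$ with a bound depending only on $k$, $a$, and the uniform parabolicity and H\"older data, so in particular independent of $i$. Every Euclidean composition $\partial^{\gamma}\tilde u$ with $|\gamma|\le k$ (parabolic weighting $2\gamma_0 + |\gamma'|\le k$) is therefore bounded in $C^{2+a,(2+a)/2}(B_T)$ uniformly in $i$. Pushing back through $\psi_i^{-1}$ converts these Euclidean derivatives into the $0$-derivatives $x\partial_x$, $x\partial_{y^a}$ and $\partial_t$, and taking the supremum over the uniformly locally finite Whitney cover gives $D^{\gamma}u \in C^{2+a,(2+a)/2}_e(M_T)$ for every $|\gamma|\le k$, which closes the induction.

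The only substantive point — and hence the main obstacle — is the $i$-uniformity of all the Schauder data as $x_i \to 0$, and this is precisely what the $0$-calculus and $0$-H\"older setup is engineered to supply: the $0$-derivatives are the natural differentiations that pull back to bounded Euclidean derivatives under $\psi_i$, and the weight $(x+x')^a$ makes the $0$-H\"older seminorm scale-invariant under these affine scalings. Once this uniformity is observed, the proposition is a direct transcription of the classical interior parabolic regularity theorem.
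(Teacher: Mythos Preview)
Your proposal is correct and follows precisely the approach the paper indicates: the paper does not give a detailed proof but states that the result follows by pulling back under the affine maps $\psi_i$ of the Whitney decomposition so that the uniformly degenerate operator becomes uniformly parabolic on a fixed standard cylinder, and then invoking the classical estimates \cite[Theorems 8.11.1, 8.12.1]{Krylov}. Your write-up supplies exactly these details; the only cosmetic discrepancy is that the paper parametrizes the standard ball with centre $(1,0)$ via $\psi_i(v,w,t)=(x_i v,\,y_i+x_i w,\,t)$ (so $x\partial_x$ pulls back to $v\partial_v$ rather than $(2+\bar x)\partial_{\bar x}$), but this is immaterial to the argument.
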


\subsection{Parabolic Schauder estimates}

We now state the main facts from linear parabolic PDE theory that we need.  We will be interested in the following problem
\begin{equation} \label{2cauchy}
 \left\{ \begin{array}{rl} 
(\partial_t - L) u(\zeta,t) & = f(\zeta,t) \\
u(\zeta,0) &= 0,\end{array} \right. 
\end{equation}
 
The basic result is 
\begin{theorem} \label{thm:schauder}
Suppose $L$ is a second order uniformly degenerate elliptic operator with time-independent coefficients.  For every $f \in x^{\mu} C^{a, \frac{a}{2}}_e(M_T)$ there is a solution $u$ to \eqref{2cauchy} in $x^{\mu} C^{2+a, \frac{2+a}{2}}_e(M_T)$.  Moreover, $u$ satisfies the parabolic Schauder estimate
\begin{equation} \label{est:schauder}
 ||u||_{x^{\mu} C^{2+a, \frac{2+a}{2}}_e(M_T)} \leq K  ||f||_{x^{\mu} C^{a, \frac{a}{2}}_e(M_T)} . 
\end{equation}
\end{theorem}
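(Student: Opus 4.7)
The plan is to construct a solution operator via the $0$-heat calculus (Mazzeo--Albin) and to derive the Schauder estimate by a scaling argument that reduces the statement to classical parabolic Schauder estimates on the Whitney balls $B_i$. Both ingredients are referenced in the paper's own roadmap and belong to the appendix.

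First, using Albin's $0$-heat calculus I would construct a fundamental solution $H(t,\zeta,\zeta')$ for $\partial_t - L$ as a polyhomogeneous distribution on the appropriate blown-up parabolic $0$-double space. Since $L$ has time-independent coefficients and is uniformly degenerate elliptic with diagonal principal symbol, the construction internal to the calculus yields a heat semigroup with $H(0) = \mathrm{Id}$, $(\partial_t - L) H(t) = 0$ for $t > 0$, and prescribed polyhomogeneous asymptotics at each boundary face (left, right, temporal front, and initial time faces). The index sets at the left and right faces are precisely what will control the weight $x^\mu$. Duhamel's formula then provides the candidate solution
\[ u(\zeta,t) = \int_0^t \bigl( H(t-s) f(\cdot,s) \bigr)(\zeta) \, ds, \]
and verification that $u$ solves \eqref{2cauchy} is immediate from the semigroup property. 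The mapping properties of the $0$-heat calculus, obtained by composing conormal distributions against the weighted Hölder input, show that $u \in x^\mu C^{2+a,(2+a)/2}_e(M_T)$ whenever $f \in x^\mu C^{a,a/2}_e(M_T)$, giving existence.

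For the Schauder estimate itself I would argue locally by scaling. Cover $\Mbar$ by the Whitney cover $\{B_i\}$ of Section \ref{section:functionspaces} and use the affine maps $\psi_i \colon B_T \to (B_i)_T$ from a fixed standard cylinder. The pullback $\psi_i^* L$ becomes a uniformly parabolic operator on the unit cylinder with coefficients uniformly bounded in $C^{a,a/2}$, and $\psi_i^*$ identifies $0$-Hölder norms on $(B_i)_T$ with classical parabolic Hölder norms on $B_T$ up to multiplicative constants independent of $i$. Applying the classical interior parabolic Schauder estimate (Krylov, Theorems 8.11.1 and 8.12.1) on each $B_T$ and taking the supremum over $i$ then delivers \eqref{est:schauder}. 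The weight $x^\mu$ is handled by conjugating: the operator $\widetilde{L} := x^{-\mu} L x^\mu$ is again a second order uniformly degenerate elliptic operator whose lower-order commutator terms can be absorbed into the right-hand side via interpolation in the $0$-Hölder scale.

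The main obstacle is the first step: the fundamental solution must be constructed on the $0$-heat double space with sharp enough control of its polyhomogeneous expansions at every boundary face to read off the weighted $0$-Hölder mapping property. This is the technical core, and is where the full machinery of \cite{Mazzeo} and \cite{Albin} is invoked. By contrast, once the parametrix is in place and its kernel structure is understood, the scaling argument of the final step is essentially routine, and together they yield both the existence and the estimate asserted in the theorem.
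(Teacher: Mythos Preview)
Your overall strategy matches the paper's: construct the heat kernel in Albin's $0$-heat calculus, write the solution via Duhamel's formula, reduce to the unweighted case by conjugating $L \mapsto x^{-\mu} L x^{\mu}$, and then exploit the Whitney cover with the affine maps $\psi_i$ to reduce to classical parabolic estimates on a fixed cylinder. So the architecture is right.

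There is, however, one structural step in the paper's argument that your proposal glosses over, and it matters. The paper does \emph{not} obtain \eqref{est:schauder} by applying interior parabolic Schauder estimates to the PDE on each $(B_i)_T$. Instead it works directly with the Duhamel integral and decomposes the kernel as $h = \phi h + (1-\phi) h =: h_1 + h_2$, where $\phi$ localizes near the temporal front face $B_{00,2}$. The far piece $h_2$ is polyhomogeneous and vanishes to infinite order at $B_{00,2}$ (and the other relevant faces), so the $0$-H\"older bounds for $u_2$ follow by straightforward kernel estimates in the coordinates \eqref{coord-near-lc}--\eqref{coord-near-tff}. Only the near-diagonal piece $h_1$ is then pulled back under $\psi_i$; after rescaling it becomes a classical heat potential on the standard cylinder, and the estimate is read off from \cite{Ladyzenskaja}, not from an interior Schauder estimate for the equation.

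The reason this distinction is not merely cosmetic is that your proposed route---apply Krylov's interior Schauder estimate on each $\psi_i^{-1}((B_i)_T)$ and take the supremum over $i$---produces an estimate of the form $\|u\|_{C^{2+a}} \leq C(\|f\|_{C^a} + \|u\|_{C^0})$, with a lower-order term on the right that you have not explained how to absorb. On a noncompact manifold with zero initial data this can be handled (e.g.\ via a maximum principle or by a direct $L^\infty$ bound from the kernel), but you would need to say so. The paper avoids the issue entirely by estimating the solution operator $f \mapsto u$ rather than the equation, and the near/far kernel splitting is what makes that tractable.
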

The Schauder constant $K$ that appears in the statement depends on $T$ but remains bounded as $T \rightarrow 0$.  Please see the appendix for a proof.

\subsection{Mapping properties of the heat operator}

Given the homogeneous Cauchy problem
\begin{equation} \label{cauchy-homog}
 \left\{ \begin{array}{rl} 
(\partial_t - L) u(\zeta,t) & = 0 \\
u(\zeta,0) &= \phi(\zeta),\end{array} \right. 
\end{equation}
let $A$ denote the heat operator such that takes $\phi$ to the solution of this problem, i.e. $(A\phi)(\zeta,t) = u(\zeta,t)$.  We also use the notation that $A = e^{t L}$.  In the appendix we describe how $A$ is given by an integration against a specific polyhomogeneous distribution on a certain manifold with corners that covers $M \times M \times \bR^+$.  The mapping properties of such operators follow from the asymptotics at each of the boundary hypersurfaces.  A key result that we will need is that if $V_b$ is a $b$-vector field and $A$ is a heat operator, then the commutator $[A, V_b]$ has the same asymptotics as $A$, and will enjoy the same mapping properties.  See Proposition \ref{prop:commutator} for a precise formulation.

Let $H$ denote the following time convolution of the heat operator
\[ (H f)(\zeta, t) = \int_0^t e^{(t-s)L} f(\cdot, s) ds. \]
This operator provides a solution to the inhomogeneous Cauchy problem with zero initial data.  The precise mapping properties we need are given in the following
\begin{prop} (see Corollary \ref{cor:polyhomog}) \label{prop:polyhomog} If $\phi \in x^{\mu} C^{\infty}(\Mbar)$ and $f \in x^{\mu} C^{\infty}(\Mbar_T)$ then 
\begin{enumerate}
\item $A\phi \in x^{\mu} C^{\infty}(\Mbar_T)$.
\item $Hf \in x^{\mu} C^{\infty}(\Mbar_T)$.
\end{enumerate}
\end{prop}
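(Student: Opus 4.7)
The plan is to leverage the polyhomogeneous structure of the heat kernel $A$ on the $0$-heat space (the content of the appendix) together with Proposition \ref{prop:commutator} to upgrade interior $0$-H\"older regularity to smoothness up to the boundary. The key difficulty is that the spaces $C_e^{k+a,\frac{k+a}{2}}(M_T)$ allow no tangential regularity at $\partial M$, so Theorem \ref{thm:schauder} and Proposition \ref{parabolicregularity} alone cannot produce membership in $C^\infty(\Mbar_T)$; one must genuinely commute the tangential $b$-vector fields $\partial_{y^a}$ through $A$ and $H$.

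For part (1), I would first argue that iterating the homogeneous version of Theorem \ref{thm:schauder} together with Proposition \ref{parabolicregularity} gives $A\phi \in x^{\mu} C_e^{k+a,\frac{k+a}{2}}(M_T)$ for every $k$. To promote this to smoothness up to the boundary, I would commute a tangential $b$-vector field through $A$ by writing
\begin{equation*}
\partial_{y^a}(A\phi) = A(\partial_{y^a}\phi) + [\partial_{y^a}, A]\phi.
\end{equation*}
Since $\partial_{y^a}\phi \in x^{\mu} C^{\infty}(\Mbar)$, the first term is in $x^{\mu} C_e^{\infty,\infty}(M_T)$ by the preceding step. The second term lies in the same space by Proposition \ref{prop:commutator}, which asserts that $[\partial_{y^a}, A]$ has the same asymptotic structure at the boundary hypersurfaces of the heat space as $A$ itself, and hence the same mapping properties. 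Iterating this identity for an arbitrary string $\partial_{y^{a_1}} \cdots \partial_{y^{a_\ell}}$ of $b$-vector fields and expanding via the standard commutator identity shows that every mixed spatial $b$-derivative of $A\phi$ lies in $x^{\mu} C_e^{\infty,\infty}(M_T)$. Substituting $\partial_t(A\phi) = L(A\phi)$ converts time derivatives into spatial derivatives, and assembling everything gives $A\phi \in x^{\mu} C^{\infty}(\Mbar_T)$.

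Part (2) follows by an entirely parallel argument applied to the Duhamel operator $(Hf)(\zeta,t) = \int_0^t e^{(t-s)L} f(\cdot,s)\,ds$. Theorem \ref{thm:schauder} already provides $Hf \in x^{\mu} C_e^{2+a,\frac{2+a}{2}}(M_T)$, and iterating in the spatial derivatives of $f$ handles higher $0$-derivatives. The commutator statement of Proposition \ref{prop:commutator} carries over to $H$ by differentiating under the time integral, so $[\partial_{y^a}, H]$ enjoys the same mapping properties as $H$; the same bootstrap as in part (1) then produces all tangential $b$-derivatives of $Hf$, while $\partial_t Hf = L(Hf) + f$ converts higher time derivatives into spatial ones.

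The main obstacle is the commutator step: one must verify that Proposition \ref{prop:commutator} applies in the weighted scale $x^{\mu} C_e$ with the correct preservation of weight, and that repeated commutation does not degrade the boundary behaviour. This reduces to understanding the polyhomogeneous expansion of the heat kernel at each of the boundary hypersurfaces of the $0$-heat double space, which is precisely the content deferred to the appendix. Granting that analytic input, the remainder of the argument is the standard bootstrap organizing $0$-regularity and iterated $b$-commutators into $C^{\infty}$ up to the boundary.
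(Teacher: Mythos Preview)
Your commutator bootstrap produces only \emph{conormality} (full $b$-regularity), not smoothness up to the boundary, and these are genuinely different things. Concretely, the $b$-vector fields are generated by $x\partial_x$ and $\partial_{y^a}$; the bare normal derivative $\partial_x$ is never among them. Knowing that every word in $x\partial_x,\partial_{y^a},\partial_t$ applied to $A\phi$ lies in $x^{\mu}C_e^{\infty,\infty}(M_T)$ places $A\phi$ in the paper's space $x^{\mu}C^{\infty,\infty,\infty}(M_T)$, not in $x^{\mu}C^{\infty}(\Mbar_T)$. A function such as $x^{\mu}\sin(\log x)$ is stable under any number of $b$-derivatives while remaining outside $x^{\mu}C^{\infty}(\Mbar)$. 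The paper itself flags this distinction explicitly in the proof of Theorem~\ref{theorem:reg}: after obtaining $v\in xC^{\infty,\infty,\infty}(M_T)$ it says ``It remains to show that $v$ is smooth up to the boundary,'' and then invokes the very proposition you are trying to prove to close the gap. So your argument, as written, is circular with respect to the role this proposition plays in the paper.

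The paper's proof proceeds by an essentially different mechanism: it exploits that the heat kernel is a \emph{polyhomogeneous} distribution on the blown-up heat space $HM^2_e$ and applies Melrose's pushforward theorem to the $b$-fibration $\beta_L$. One computes the exponent matrices of $\beta_R$ and $\beta_L$, reads off that pushing forward the product of the kernel with $\beta_R^*\phi$ preserves the index set of $\phi$ at the spatial boundary, and hence obtains $A\phi\in\sA^{(\sF,0)}_{phg}(M\times\bR^+)$. Polyhomogeneity with the integer index set of $\phi$ is exactly $x^{\mu}C^{\infty}(\Mbar)$; the residual issue of smoothness in $t$ versus $\sqrt t$ is then handled by the heat equation itself. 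Proposition~\ref{prop:commutator} is \emph{not} used here; it is used elsewhere (Section~\ref{section:regularity}) to obtain $b$-regularity of the nonlinear solution, which is a strictly weaker conclusion.
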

Once again we defer the proof to the appendix.

\section{Short-time existence} \label{section:ste}

In this section we prove short-time existence of a solution to \eqref{NRDTF2} in the $0$-H\"older spaces.  This is based on a contraction mapping argument.

We begin by making several observations that will be needed later.  Let $E = R+R^{cc}$ be the curvature `error' tensor for the conformally compact metric $h$, where $R^{cc}$ denotes the $+1$ constant curvature $4$-tensor.  By our convention for function spaces, if $h$ is smoothly conformally compact then
\[ h = \hbar_{ij} \frac{dx^i}{x} \frac{dx^j}{x} \in C^{\infty}_e(M), \]
where we recall the convention that $(x^0, x^1, \cdots, x^n) = (x, y^1, \cdots, y^n)$.

We also have $E \in x C^{\infty}(\overline{M}; T^4 M)$ and thus $E \in x C_e^{\infty}(M; T^4 M)$.

We need an expansion for the inverse of the metric.  Suppose that $v \in x C^{k+a,\frac{k+a}{2}}_e(M_T)$ with sufficiently small norm, then the symmetric $2$-tensor $h + v$ will be invertible and $(h+v)^{-1} \in C^{k+a,\frac{k+a}{2}}_e(M_T)$.  Furthermore, we document a useful expansion
\begin{equation} \label{exp-inv}
 (h+v)^{ab} = h^{ab} - h^{al} h^{bm} v_{ml} + (h+v)^{bl} h^{am} h^{pq} v_{lp} v_{mq}. 
\end{equation}

\subsection{Conditioning the Ricci-DeTurck system.}

Here we pursue short-time existence of the normalized Ricci-DeTurck flow. We will look for a solution of the form
\[ g_{ij}(x,y,t) = h_{ij}(x,y) + v_{ij}(x,y,t) \]
where $v_{ij} \in x C_e^{k+a,\frac{k+a}{2}}(M_T)$.  The system \eqref{NRDTF2} for $v$ may be written in the following way, which will facilitate treating the quasilinear system with a contraction mapping argument.  Here we handle the quasilinearity as a quadratic error.

\begin{equation} \left\{ \begin{array}{ll}  0 &= \partial_t v_{ij} - h^{ab} \nablah_a \nablah_b v_{ij} - \left((h+v)^{ab}-h^{ab}\right) \nablah_a \nablah_b v_{ij} + 2n (h+v)_{ij}\\
 & - (h+v)^{ab} (h+v)_{ip} h^{pq} \Rh_{jaqb} -  (h+v)^{ab} (h+v)_{jp} h^{pq} \Rh_{iaqb}  \\
 &+ [(h+v)^{-1} *(h+v)^{-1} * \nablah v * \nablah v]_{ij},\\
v(0) &= 0. \end{array} \right.
\end{equation}
Note that in this expression we have switched curvature sign conventions from \cite{Shi}.  Shi lowers an index in the curvature tensor to the third slot whereas I lower to the fourth slot.  The asterisk denotes linear contractions whose precise formula is unimportant for what follows.

Let us introduce notation for some of the terms above.  Define
\begin{align*}
 (T_1 v)_{ij} &:= \left((h+v)^{ab}-h^{ab}\right) \nablah_a \nablah_b v_{ij}, \\
 (T_2 v)_{ij} &:= 2n(h_{ij} + v_{ij}) + \left( - (h+v)^{ab} (h+v)_{ip} h^{pq} \Rh_{jaqb} -  (h+v)^{ab} (h+v)_{jp} h^{pq} \Rh_{iaqb} \right), \\
 (T_3 v)_{ij} &:= (h+v)^{-1} *(h+v)^{-1} * \nablah v * \nablah v. 
\end{align*}

We begin by studying the various mapping properties of the terms of this equation.  Much of the argument depends on the special algebraic structure of these equations.  We introduce the following notation.  We will say various terms are $\sQ(v)$ if they are linear combinations of contractions of bounded tensors with either $v$ or its first two $h$-covariant derivatives.  We will loosely refer to this dependence as being `quadratic', and we will make precise the estimates we need at the end of this section.  Note that indices on $\sQ$ index the term of origin in the decomposition above.

\begin{lemma} 
\[ T_1 v = \sQ_1(v), \; and \]
\[ \sQ_1 : x^{\nu} C_e^{k+a;\frac{k+a}{2}}(M_T) \longrightarrow x^{2\nu} C_e^{k-2+a;\frac{k-2+a}{2}}(M_T).\]
\end{lemma}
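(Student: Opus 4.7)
The plan is to substitute the inverse-metric expansion \eqref{exp-inv} directly into the definition of $T_1$, which splits it into two explicit contractions whose algebraic form immediately displays the dependence on $v$. Writing out
\[ (T_1 v)_{ij} = \left[ -h^{al}h^{bm}v_{ml} + (h+v)^{bl}h^{am}h^{pq}v_{lp}v_{mq} \right] \nablah_a \nablah_b v_{ij}, \]
this is manifestly a linear combination of contractions of bounded tensors (the inverse metrics $h^{-1}$ and $(h+v)^{-1}$) with either a single factor of $v$ or two factors of $v$, together with a factor of $\nablah^2 v$. Since the terms $v$ and $\nablah^2 v$ are all of the form admitted in the definition of $\sQ$, this confirms $T_1 v = \sQ_1(v)$.

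Next I would establish the mapping property using a weighted 0-H\"older product rule: for $u \in x^{\nu_1} C_e^{k+a,(k+a)/2}(M_T)$ and $w \in x^{\nu_2} C_e^{j+a,(j+a)/2}(M_T)$ with $j \leq k$, the product $u \otimes w$ lies in $x^{\nu_1+\nu_2} C_e^{j+a,(j+a)/2}(M_T)$. This follows from the classical parabolic H\"older product rule on each Whitney ball $B_i$ after pulling back by the affine map $\psi_i : B_T \to (B_i)_T$, together with the fact that the prefactor $x^{\nu_1+\nu_2}$ is uniformly comparable to $x_i^{\nu_1+\nu_2}$ on $B_i$. Because $\nablah$ is a linear combination of $0$-vector fields with coefficients smooth up to the boundary, iteration gives $\nablah^2 v \in x^\nu C_e^{k-2+a,(k-2+a)/2}(M_T)$.

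Combining these ingredients on the two pieces: the linear piece is the product of the bounded smooth tensor $h^{-1} \otimes h^{-1}$ with one factor of $v$ (weight $x^\nu$) and one factor of $\nablah^2 v$ (weight $x^\nu$, regularity $k-2+a$), landing in $x^{2\nu} C_e^{k-2+a,(k-2+a)/2}(M_T)$. For the quadratic piece I need control of $(h+v)^{-1}$, which, under the standing smallness hypothesis on $v$ in $x C_e^{k+a,(k+a)/2}(M_T)$ stated just before \eqref{exp-inv}, is obtained by a Neumann series $(h+v)^{-1} = h^{-1} \sum_{j \geq 0} (-h^{-1} v)^j$, yielding $(h+v)^{-1} \in C_e^{k+a,(k+a)/2}(M_T)$ with uniform bound. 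The resulting piece carries weight $x^{3\nu}$, which embeds into $x^{2\nu} C_e^{k-2+a,(k-2+a)/2}(M_T)$ since $x$ is bounded on $\Mbar$ and $\nu > 0$.

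The main obstacle I expect is the careful bookkeeping of the weighted product rule and the verification that the $x^\nu$ weight commutes cleanly past $0$-derivatives and tensor contractions without generating spurious lower-order terms. I would handle this by systematically working in the rescaled cylinder $B_T$ under each affine $\psi_i$, where $0$-derivatives pull back to uniformly bounded ordinary derivatives, the weight $x^\nu$ becomes a bounded constant on each ball, and the desired estimate reduces to the standard parabolic H\"older product rule with a constant independent of $i$.
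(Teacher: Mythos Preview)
Your proposal is correct and follows essentially the same approach as the paper: substitute the inverse-metric expansion \eqref{exp-inv} into $T_1$, read off the schematic form $h^{-1}*h^{-1}*v*\nablah^2 v + (h+v)^{-1}*h^{-1}*h^{-1}*v*v*\nablah^2 v$, and infer the mapping property from the H\"older product rule together with $\nablah^2: x^{\nu}C_e^{k+a} \to x^{\nu}C_e^{k-2+a}$. Your write-up is in fact more detailed than the paper's, which simply asserts the final mapping property after displaying the contraction structure; the extra care you take with the weighted product rule and the Neumann series for $(h+v)^{-1}$ is not needed for the paper's terse argument but is entirely in its spirit.
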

\begin{proof}
We begin by applying the expansion for the inverse in equation \ref{exp-inv}
\begin{align*}
 (T_1 v)_{ij} &:= \left((h+v)^{ab}-h^{ab}\right) \nablah_a \nablah_b v_{ij} \\
&= \left( h^{ab} - h^{al} h^{bm} v_{ml} + (h+v)^{bl} h^{am} h^{pq} v_{lp} v_{mq} - h^{ab} \right) \nablah_a \nablah_b v_{ij} \\
&= h^{-1} * h^{-1} * v * \nablah^2 v + (h+v)^{-1} * h^{-1} * h^{-1} * v * v * \nablah^2 v ,
\end{align*}
which shows the expression is quadratic in $v$.  Noting that $h^{-1} \in C_e^{\infty}(M)$ and $v \in x^{\nu} C_e^{k+a;\frac{k+a}{2}}(M_T)$, we see that while we lose two $0$-derivatives we gain decay in $x$, i.e. $\sQ_1 v \in x^{2\nu} C_e^{k-2+a;\frac{k-2+a}{2}}(M_T)$.
\end{proof}

The expression for $T_2$ simplifies considerably.  Note that in order to recognize the Lichnerowicz Laplacian below we will not surpress indices in the expression that follows. 

\begin{lemma} 
\[ (T_2 v)_{ij} = 2 E_{ij} + 2n v_{ij} + v_{ip} \Rch_{j}^{\;\;p} + v_{jp} \Rch_{i}^{\;\;p}+ 2v_{ml} \Rh_{\;\; ij}^{m\;\;\;l} + \sQ_2(v)_{ij}.\]
\[ \sQ_2: x^{\nu} C_e^{k+a;\frac{k+a}{2}}(M_T) \longrightarrow x^{2\nu} C_e^{k+a;\frac{k+a}{2}}(M_T). \]
\end{lemma}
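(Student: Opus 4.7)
The plan is to Taylor-expand $T_2 v$ in $v$ about $v = 0$, using the inverse expansion \eqref{exp-inv} and the algebraic symmetries of the background Riemann tensor $\Rh$. I will identify the constant-in-$v$ term with $2E_{ij}$, read off the three named linear-in-$v$ terms, and collect everything of order at least two in $v$ into $\sQ_2$; the claimed mapping property for $\sQ_2$ will be immediate from the structure of the remainder.

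First I will handle the constant term. Setting $v = 0$ leaves
\[ T_2(0)_{ij} = 2 n h_{ij} - h^{ab}\Rh_{jaib} - h^{ab}\Rh_{iajb}. \]
The pair symmetry $\Rh_{jaib} = \Rh_{ibja}$ together with a relabeling of dummy indices identifies both Riemann traces with a Ricci contraction, so $T_2(0)_{ij}$ reduces to a combination of $n h_{ij}$ and $\Rch_{ij}$. Tracing the definition $E = R + R^{cc}$ against $h^{ab}$, together with the explicit form $(R^{cc})_{ijkl} = h_{il}h_{jk} - h_{ik}h_{jl}$ (which contracts to give $\pm n h_{ij}$), will then identify $T_2(0)$ with $2E_{ij}$ for the natural $2$-tensor contraction of the $4$-tensor error; this is consistent with hyperbolic space (for which $E\equiv 0$ and $\Rch = -n h$) being a fixed point of the normalized Ricci flow.

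Next, for the linear-in-$v$ part I will track two contributions per symmetrized Riemann term. The contribution from $(h+v)_{ip} = h_{ip} + v_{ip}$, after reducing $h^{ab}\Rh_{jaqb}$ to Ricci, gives $v_{ip}\Rch_j^{\;\;p}$ and (after symmetrizing in $i\leftrightarrow j$) $v_{jp}\Rch_i^{\;\;p}$. The contribution from the linear piece of \eqref{exp-inv} produces a double contraction of the symmetric tensor $v^{ab}$ against $\Rh$; symmetry of $v^{ab}$ together with the first Bianchi identity will let me swap the two middle indices of $\Rh$ under this contraction, and antisymmetry in the first pair will put the result into the stated form $v_{ml}\Rh^{m\;\;\;l}_{\;\;ij}$. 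Symmetrizing in $i, j$ yields the factor $2v_{ml}\Rh^{m\;\;\;l}_{\;\;ij}$.

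Finally, everything not absorbed by the above is at least quadratic in $v$ and is schematically of the form $h^{-1} * h^{-1} * v * v * \Rh$, $(h+v)^{-1} * h^{-1} * h^{-1} * v * v * \Rh$, or $v * v$, with bounded smooth coefficients (using that $\|v\|$ small controls $(h+v)^{-1}$). The crucial observation is that no covariant derivatives of $v$ appear anywhere in $T_2$, so each such expression is multilinear algebraic in $v$ and preserves $C_e^{k+a, \frac{k+a}{2}}$ regularity while multiplying the weight by $x^{\nu}$ per factor of $v$; this yields $\sQ_2 : x^{\nu} C_e^{k+a;\frac{k+a}{2}}(M_T) \to x^{2\nu} C_e^{k+a;\frac{k+a}{2}}(M_T)$. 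The main obstacle is the Bianchi-and-symmetry bookkeeping needed to bring the second linear contribution into the stated form; the remaining computations are routine.
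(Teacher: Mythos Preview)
Your proposal is correct and follows essentially the same route as the paper: expand $(h+v)^{-1}$ via \eqref{exp-inv}, separate the constant, linear, and quadratic pieces in $v$, use $\Rch_{ij} = -n h_{ij} + E_{ij}$ to identify the constant term as $2E_{ij}$, and observe that the remainder $\sQ_2$ involves no derivatives of $v$ so that it maps $x^{\nu}C_e^{k+a,\frac{k+a}{2}}$ to $x^{2\nu}C_e^{k+a,\frac{k+a}{2}}$. One small remark: the paper does not invoke the first Bianchi identity for the index manipulation you flag as the main obstacle; the pair symmetry $\Rh_{abcd}=\Rh_{cdab}$ together with the antisymmetries already suffices to put the linear term coming from the inverse expansion into the form $v_{ml}\Rh^{m\;\;\;l}_{\;\;ij}$, so that step is in fact routine rather than the delicate point.
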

\begin{proof}
By applying the expansion for the inverse to terms in $T_2$ we find the expression contains inhomogeneous terms as well as terms linear in $v$ which we must separate from the main expression.  In particular, considering one of the constituent terms in $T_2$ we find
\begin{align*}
 -(h+v)^{ab} & (h+v)_{ip} h^{pq} \Rh_{jaqb} = -(h+v)^{ab} (h+v)_{ip} \Rh_{ja \; b}^{\;\;\;\;p} \\
&= - \left(h^{ab} - h^{al} h^{bm} v_{ml} + (h+v)^{bl} h^{am} h^{pq} v_{lp} v_{mq}\right) (h_{ip} + v_{ip} ) \Rh_{ja \; b}^{\;\;\;\;p} \\
&= \Rch_{ij} + v_{ip} \Rch_{j}^{\;\;p} - v_{ml} \Rh_{\;\; ij}^{m\;\;\;l} + [h^{-1} * h^{-1} * v * v * \Rh]_{ij},
\end{align*}
where $\Rh$ in this calculation denotes the $(1,3)$ tensor.  One may check that the final quadratic contraction terms map $x^{\nu} C_e^{k+a;\frac{k+a}{2}}(M_T) \longrightarrow x^{2\nu} C_e^{k+a;\frac{k+a}{2}}(M_T)$.

Consequently,
\begin{align*}
 - (h+v)^{ab} (h+v)_{ip} h^{pq} \Rh_{jaqb} &-  (h+v)^{ab} (h+v)_{jp} h^{pq} \Rh_{iaqb} \\
&= 2 \Rch_{ij} + v_{ip} \Rch_{j}^{\;\;p} + v_{jp} \Rch_{i}^{\;\;p}- 2v_{ml} \Rh_{\;\; ij}^{m\;\;\;l} + \sQ_2(v)_{ij}
\end{align*}

Note that by the curvature asymptotics $\Rch_{ij} = -n h_{ij} + E_{ij}$ where $E_{ij} \in x C^{\infty}(\Mbar)$.  Therefore, re-assembling $T_2$ we find
\begin{align*}
 (T_2 v)_{ij} &= 2 E_{ij} + 2n v_{ij} + v_{ip} \Rch_{j}^{\;\;p} + v_{jp} \Rch_{i}^{\;\;p}- 2v_{ml} \Rh_{\;\; ij}^{m\;\;\;l} + \sQ_2(v)_{ij}
\end{align*}
\end{proof}

The third term requires no additional conditioning.
\begin{lemma} 
\[ (T_3 v)_{ij} = \sQ_3(v), \mbox{and}\]
\[ \sQ_3 : x^{\nu} C_e^{k+a;\frac{k+a}{2}}(M_T) \longrightarrow x^{2\nu} C_e^{k-1+a;\frac{k-1+a}{2}}(M_T).\]
\end{lemma}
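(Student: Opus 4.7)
The statement for $T_3$ is essentially structural and should be easier than the $T_1, T_2$ lemmas, because the expression $(h+v)^{-1}*(h+v)^{-1}*\nablah v * \nablah v$ is already manifestly quadratic in $v$ and its $h$-covariant derivatives, and no linear-in-$v$ part needs to be split off. So the plan is to verify directly that $T_3 v$ falls into the class $\sQ_3(v)$ (linear contractions of bounded tensors with $v$ and its first two $\nablah$ derivatives) and then track weights and Hölder orders factor by factor.

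The first step is to handle the inverse factors $(h+v)^{-1}$ using the expansion \eqref{exp-inv}. Since $h^{-1}\in C_e^{\infty}(M)$ and $v\in x^{\nu} C_e^{k+a,\frac{k+a}{2}}(M_T)$ with small norm, the Neumann-type expansion shows $(h+v)^{-1}\in C_e^{k+a,\frac{k+a}{2}}(M_T)$, with no intrinsic decay (this is the same observation used in the proofs of the $T_1, T_2$ lemmas). So each inverse factor is just a bounded multiplier in the appropriate unweighted Hölder space.

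The second step is to estimate $\nablah v$. Because $\nablah$ is built from the $0$-vector fields $x\partial_{x^i}$, which act as bounded derivations on the $0$-Hölder scale and preserve the $x^\nu$ prefactor (one checks this directly on $v=x^\nu w$), we have
\[
\nablah : x^{\nu} C_e^{k+a,\frac{k+a}{2}}(M_T) \longrightarrow x^{\nu} C_e^{k-1+a,\frac{k-1+a}{2}}(M_T;\, \Sigma^2\otimes {}^0T^*M).
\]
Each factor of $\nablah v$ thus contributes one loss of derivative and one factor of $x^\nu$.

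The third step is to multiply: since the $0$-Hölder spaces $C_e^{m+a,\frac{m+a}{2}}(M_T)$ are Banach algebras under pointwise (contraction) products, two bounded factors $(h+v)^{-1}$ together with two $x^\nu C_e^{k-1+a,\frac{k-1+a}{2}}$ factors yield the claimed bound $x^{2\nu} C_e^{k-1+a,\frac{k-1+a}{2}}(M_T)$, which is the limiting regularity (the two derivatives on $v$ beat the $k+a$ regularity of $(h+v)^{-1}$). Since all contractions involve only smooth background metric entries and the $v$-dependent factors just listed, the result is indeed of the form $\sQ_3(v)$ as defined. No real obstacle arises here; the only point requiring a little care is verifying that $\nablah$ commutes with extraction of the $x^\nu$ prefactor up to a bounded multiplier, which is immediate from $x\partial_x(x^\nu)=\nu x^\nu$ and $x\partial_{y^a}(x^\nu)=0$.
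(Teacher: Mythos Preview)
Your proposal is correct and matches the paper's approach: the paper in fact gives no proof at all for this lemma, noting only that ``the third term requires no additional conditioning,'' since $T_3 v$ is already manifestly quadratic in $\nablah v$. Your argument simply fills in the routine verification the paper leaves implicit---tracking that $(h+v)^{-1}$ is bounded in $C_e^{k+a,\frac{k+a}{2}}$, that each $\nablah v$ costs one derivative while preserving the $x^\nu$ weight, and that the product lands in $x^{2\nu} C_e^{k-1+a,\frac{k-1+a}{2}}$.
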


The preceding lemmas allows us to condition the equation for $v$ further.  We now move the terms linear in $v$ to the other side of the equation.  We also have from \cite{Chow} that the term $h^{ab} \nablah_a \nablah_b v_{ij}$ is the rough Laplacian on $2$-tensors.  In fact, we see the linear elliptic part of the equation is the Lichnerowicz Laplacian on $2$-tensors,
\[ L = \Delta^h_L v_{ij} + 2n v_{ij} = h^{ab} \nablah_a \nablah_b v_{ij} + v_{ip} \Rch_{j}^{\;\;p} + v_{jp} \Rch_{i}^{\;\;p}- 2v_{ml} \Rh_{\;\; ij}^{m\;\;\;l} + 2n v_{ij}. \]

We may write:
\begin{equation}\label{eqforv} \left\{ 
\begin{array}{ll} 
\partial_t v_{ij} - (L v)_{ij} &=  \sQ v_{ij} + 2 E_{ij}, \\
                     v_{ij}(0) &= 0. 
\end{array} \right.
\end{equation}
For the remainder of the argument we drop indices.

To summarize the argument so far, we have conditioned the flow equations to recognize a strongly parabolic equation for the metric.  As the quadratic terms $\sQ$ depend on $v$ and up to its first two covariant derivatives in a polynomial fashion, there is a constant $C>0$ depending on the algebraic structure of $\sQ$ such that for all $u, v \in x^{\mu} C^{2+a,\frac{a}{2}}(M_T)$,

\begin{equation} \label{eqn:estforQ1} ||\sQ(v)||_{x^{\mu} C^{a,\frac{a}{2}}_e(M_T)} \leq C ||v||_{x^{\mu} C^{2+a,\frac{a}{2}}_e(M_T)}^2, \end{equation}
\begin{align} \label{eqn:estforQ2}
 ||\sQ(u) & - \sQ(v) ||_{x^{\mu} C^{a,\frac{a}{2}}_e(M_T)} \nonumber \\
& \leq C \max\left\{ ||u||_{x^{\mu} C^{2+a,\frac{a}{2}}_e(M_T)},||v||_{x^{\mu} C^{2+a,\frac{a}{2}}_e(M_T)} \right\} ||u - v||_{x^{\mu} C^{2+a,\frac{a}{2}}_e(M_T)}.
\end{align}

Note in these estimates that we are relaxing control of one time derivative.  This will facilitate the contraction mapping argument given in the next section.  Note also that this part of the argument will not explicitly use the gain of decay by $\sQ$.

In the regularity argument of Section \ref{section:regularity}, we will need this additional decay.  We conclude this section with the following lemma.
\begin{lemma} \label{lemma:mappingforQ}
All of the quadratic mapping terms satisfy
\[ \sQ: x^{\nu} C_e^{k+a,\frac{k+a}{2}}(M_T) \longrightarrow x^{2\nu} C_e^{k-2+a,\frac{k-2+a}{2}}(M_T). \]
Moreover, we have
\begin{itemize}
\item if $w = w' + w''$, where $w' \in x^{\nu} C_e^{k+a,\frac{k+a}{2},l}(M_T)$ and $w'' \in x^{\mu} C_e^{k+a,\frac{k+a}{2}}(M_T)$, $(\nu < \mu)$ then
\[ \sQ( w ) \in x^{2\nu} C_e^{k-2+a,\frac{k-2+a}{2},l-2}(M_T) + x^{\mu+\nu} C_e^{k-2+a,\frac{k-2+a}{2}}(M_T) \]
\item if $w = w' + w''$, where $w' \in x^{\nu} C^{\infty}(\overline{M}_T)$ and $w'' \in x^{\mu} C_e^{k+a,\frac{k+a}{2},l}(M_T)$, $(\nu < \mu)$ then
\[ \sQ( w ) \in x^{2\nu} C^{\infty}(\overline{M}_T) + x^{\mu+\nu} C_e^{k-2+a,\frac{k-2+a}{2},l-2}(M_T). \]
\end{itemize}
\end{lemma}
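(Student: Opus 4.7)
The basic mapping property follows immediately from the preceding three lemmas, once assembled: $\sQ_2$ involves no derivatives of $w$ and so loses nothing, $\sQ_3$ loses one $0$-derivative from $\nablah w \cdot \nablah w$, and $\sQ_1$ loses two via the $v \cdot \nablah^2 v$ structure. Since the overall claim is governed by the worst of these, and each factor of $w$ supplies a factor of $x^{\nu}$ with the coefficients in $C^{\infty}(\Mbar)$, we obtain the stated mapping into $x^{2\nu} C_e^{k-2+a,\frac{k-2+a}{2}}(M_T)$.

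For the two bullets the idea is multilinear expansion. Iterating the identity \eqref{exp-inv} writes every occurrence of $(h+w)^{-1}$ as $h^{-1}$ plus a Neumann-type series in $w$ with smooth-on-$\overline{M}$ coefficients, which is convergent for $w$ of small $C^0$ norm (the regime we will work in). Substituting this expansion back into $\sQ_1,\sQ_2,\sQ_3$ presents $\sQ(w)$ as a sum of monomials of the form
\[ C(x,y) \cdot (\nablah^{i_1} w) \otimes (\nablah^{i_2} w) \otimes \cdots \otimes (\nablah^{i_r} w), \]
where $r \geq 2$, $\max_s i_s \leq 2$, and $C \in C^{\infty}(\overline{M})$.

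Now write $w = w' + w''$ and expand each such monomial multilinearly. Each resulting term is again a monomial in $\nablah^{i} w'$ and $\nablah^{j} w''$; we sort them into three classes. In the \emph{pure} $w'$ class every factor is a derivative of $w'$, contributing decay $x^{r\nu}\geq x^{2\nu}$ and retaining whatever tangential regularity $w'$ carries. In the \emph{cross} class at least one factor of each is present, producing decay $x^{\nu+\mu}$ and regularity limited by whichever factor is weaker. In the \emph{pure} $w''$ class every factor is a derivative of $w''$, yielding decay $x^{r\mu}\geq x^{2\mu}\geq x^{\nu+\mu}$ since $\nu < \mu$, so these terms are absorbed into the cross class. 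In the first bullet the pure $w'$ class lands in $x^{2\nu} C_e^{k-2+a,\frac{k-2+a}{2},l-2}(M_T)$ while both remaining classes land in $x^{\nu+\mu} C_e^{k-2+a,\frac{k-2+a}{2}}(M_T)$. In the second bullet the pure $w'$ class gives $x^{2\nu} C^\infty(\overline M_T)$ since derivatives of $w'$ are smooth to the boundary, cross terms inherit the tangential regularity of $w''$, and pure $w''$ again embeds into the cross class.

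The one point that requires care is the behaviour of $\partial_y$ and multiplication on the scale $C^{k+a,\frac{k+a}{2},l}$. One must check that if $u$ lies in $C^{k+a,\frac{k+a}{2},l}$ then $\nablah^2 u \in C^{k-2+a,\frac{k-2+a}{2},l-2}$, which follows because in our background coordinates $\partial_{y^a}$ commutes with $x\partial_x$ and with $x\partial_{y^b}$, and the only obstruction is the Christoffel symbols of $h$, which are smooth on $\overline{M}$; hence $\partial_y^s(\nablah^2 u) = \nablah^2(\partial_y^s u) + (\text{lower order }0\text{-operator with smooth coefficients})u$ lies in $C_e^{k-2-s+a,\frac{k-2-s+a}{2}}(M_T)$ for $s\leq l-2$. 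The analogous Leibniz estimate shows that the product of an element of $C^{k+a,\frac{k+a}{2},l}$ with one in $C^{k+a,\frac{k+a}{2}}$ lies in $C^{k+a,\frac{k+a}{2}}$ without gaining tangential regularity, which is what underlies the regularity assignment to the cross terms in the first bullet. Beyond this bookkeeping, the principal obstacle is simply remembering that \emph{every} monomial in the expansion contains at least two factors of $w$, so at least two factors of $x^{\nu}$ or $x^{\mu}$ are always available to assemble the claimed weight.
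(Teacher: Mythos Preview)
Your proof is correct and follows the same route as the paper: identify the explicit contraction structure of $\sQ_1,\sQ_2,\sQ_3$, insert $w=w'+w''$, expand multilinearly, and sort the resulting terms by decay and regularity. The paper's version is much terser (it simply lists the three contraction templates and remarks that cross terms carry the decay of $w'*w''$), whereas you spell out the Neumann expansion of $(h+w)^{-1}$ and the commutator bookkeeping for $\partial_y$ versus $\nablah^2$; these are reasonable elaborations of what the paper leaves implicit, and in fact your commutator argument shows a bit more tangential regularity ($l$ rather than $l-2$) than the statement claims.
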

\begin{proof}
The first mapping property stated follows from the previous lemmas.  We need only check the final mapping properties.  These are straightforward to check as $\nablah$ acts by $0$-derivatives thus preserves the order of decay.  The explicit contraction structure of each of the terms that form $\sQ$ are:
\begin{align*} 
\sQ_1 v &= h^{-1} * h^{-1} * v * \nablah^2 v \\
\sQ_2 v &= h^{-1} * h^{-1} * v * v * \Rh \\
\sQ_3 v &= (h+v)^{-1} *(h+v)^{-1} * \nablah v * \nablah v. 
\end{align*}
Inserting $w = w' + w''$ into the expression we find the cross terms have the decay expected of $w' * w''$. 
\end{proof}

\subsection{The contraction mapping argument}

We now explain the contraction mapping argument that leads to short-time existence for equation \eqref{eqforv}.  Write the heat operator for $\partial_t - L$ as $e^{tL}$.  Apply Duhamel's principle to \eqref{eqforv} to get an equivalent integral equation
\begin{equation}
\label{eqforv2}
 v(t) = \underbrace{\int_0^t e^{(t-s)L} \left( E + \sQ(v) \right) ds }_{:= \Psi v}.
\end{equation}

Note the definition of the map $\Psi$ in the displayed equation above.

For a parameter $\mu$ and $T$ to be specified, define a subspace $\sZ_{\mu,T}$ of $x C_e^{2+a,\frac{a}{2}}(M_T)$ by
\[ \sZ_{\mu,T} = \left\{ u \in x C_e^{2+a,\frac{a}{2}}(M_T): u(x,0) = 0, ||u||_{x C_e^{2+a,\frac{a}{2}}(M_T)} \leq \mu. \right\}. \]
This is a closed subset of a Banach space.

Suppose that $u \in Z_{\mu, T}$, it follows that $v = \Psi u$ is a solution to 
\begin{equation*} \left\{ 
\begin{array}{ll} 
(\partial_t - L) v &=  \sQ(u) + E, \\
                     v(0) &= 0. 
\end{array} \right.
\end{equation*}
As $\sQ(u) + E \in x C^{a, \frac{a}{2}}(M_T)$, the Schauder estimate implies $v \in x C^{2+a,1+\frac{a}{2}}_e(M_T) \subset xC^{2+a,\frac{a}{2}}_e(M_T)$, and so
\[ \Psi: \sZ_{\mu, T} \longrightarrow x C_e^{2+a,\frac{a}{2}}(M_T). \]

We now prove that $\Psi$ in fact maps $\sZ_{\mu, T}$ to itself and is a contraction for $\mu$ and $T$ sufficiently small.  

\begin{lemma} $\Psi: Z_{\mu,T} \longrightarrow Z_{\mu,T}$ for $\mu$ and $T$ sufficiently small.
\end{lemma}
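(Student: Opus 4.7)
The plan is to decompose $\Psi u = \Psi^E + \Psi^Q(u)$, where
\[
\Psi^E(t) = \int_0^t e^{(t-s)L} E \, ds, \qquad \Psi^Q(u)(t) = \int_0^t e^{(t-s)L} \sQ(u) \, ds,
\]
and estimate the two pieces separately. The initial condition $\Psi u(0) = 0$ is automatic from the Duhamel integrals, so membership in $\sZ_{\mu,T}$ reduces to the norm bound.

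For the nonlinear contribution, I would apply the weighted Schauder estimate (Theorem \ref{thm:schauder}) with weight $x$ together with the quadratic estimate \eqref{eqn:estforQ1} to obtain
\[
\|\Psi^Q(u)\|_{x C^{2+a, (2+a)/2}_e(M_T)} \leq K \|\sQ(u)\|_{x C^{a, a/2}_e(M_T)} \leq K C \|u\|_{x C^{2+a, a/2}_e(M_T)}^2 \leq K C \mu^2.
\]
Since $x C^{2+a, (2+a)/2}_e$, which additionally controls $\partial_t$, dominates the norm used to define $\sZ_{\mu,T}$, this yields $\|\Psi^Q(u)\|_{x C^{2+a, a/2}_e(M_T)} \leq K C \mu^2$. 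For the inhomogeneous contribution, $E \in x C^\infty(\overline M)$ is time-independent, so by Proposition \ref{prop:polyhomog} we have $\Psi^E = H E \in x C^\infty(\overline M_T)$, smooth up to $t = 0$ with $\Psi^E(\cdot, 0) = 0$. Each spatial $0$-derivative of $\Psi^E$ is then continuous in $t$ and vanishes at $t = 0$, while the $a/2$ time Hölder seminorms on $[0,T]$ are controlled by $T^{1 - a/2}\|\partial_t \Psi^E\|_{L^\infty}$. Consequently $\|\Psi^E\|_{x C^{2+a, a/2}_e(M_T)} \to 0$ as $T \to 0$.

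Combining the two bounds gives $\|\Psi u\|_{x C^{2+a, a/2}_e(M_T)} \leq \|\Psi^E\|_{x C^{2+a, a/2}_e(M_T)} + K C \mu^2$. I would first pick $\mu \leq 1/(2KC)$ so that $KC\mu^2 \leq \mu/2$, and then choose $T$ small enough that $\|\Psi^E\|_{x C^{2+a, a/2}_e(M_T)} \leq \mu/2$, yielding $\|\Psi u\|_{x C^{2+a, a/2}_e(M_T)} \leq \mu$. The main subtlety is the quantitative vanishing of $\Psi^E$ as $T \to 0$: this does not follow from the pointwise condition $\Psi^E(\cdot, 0) = 0$ alone but rather from the full up-to-boundary smoothness supplied by Proposition \ref{prop:polyhomog}, which is what converts the pointwise initial condition into a quantitative bound on the full $x C^{2+a, a/2}_e(M_T)$ norm.
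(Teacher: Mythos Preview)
Your proof is correct and follows essentially the same route as the paper: split $\Psi u$ into the quadratic piece and the inhomogeneous $E$-piece, control the quadratic piece by the Schauder estimate together with \eqref{eqn:estforQ1} to get a bound $KC\mu^2 \leq \mu/2$ for small $\mu$, and then shrink $T$ to make the $E$-piece small. The one place you differ from the paper is in how you justify the regularity of $\Psi^E$: the paper invokes parabolic regularity (Proposition~\ref{parabolicregularity}) together with the Schauder estimate to see that all derivatives of $v_2$ are bounded, whereas you appeal directly to Proposition~\ref{prop:polyhomog} to obtain $\Psi^E \in x C^{\infty}(\overline{M}_T)$. Both routes give the same conclusion, and your use of the polyhomogeneity result is arguably cleaner here since it delivers smoothness up to $t=0$ in one step. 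One small point worth making explicit: the spatial $a$-H\"older seminorms of the derivatives of $\Psi^E$ also need to tend to zero as $T\to 0$; this follows from the full smoothness (bounding the $a$-seminorm by the sup norm of one more derivative, which itself vanishes at $t=0$ with bounded time derivative), and is implicit in your final remark about the ``full up-to-boundary smoothness.''
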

\begin{proof}
To begin, let $u \in \sZ_{\mu,T}$ and set
\begin{align*} 
v_1 &:= \int_0^t e^{(t-s)L} \sQ(u)  ds \\
v_2 &:= \int_0^t e^{(t-s)L} E ds.
\end{align*}

Consider $v_1$.  This is a solution to 
\begin{equation*} \left\{ 
\begin{array}{ll} 
(\partial_t - L) v_1 &=  \sQ( u), \\
                     v_1(0) &= 0. 
\end{array} \right.
\end{equation*}
The Schauder estimate, followed by the estimates for $\sQ$ given by equations \eqref{eqn:estforQ1} and \eqref{eqn:estforQ2} gives
\begin{align*}
||v_1||_{x C^{2+a,\frac{a}{2}}_e(M_T)} &\leq ||v_1||_{x C^{2+a,1+\frac{a}{2}}_e(M_T)} \\
&\leq K ||\sQ u||_{x C^{a,\frac{a}{2}}_e(M_T)} \\
&\leq K C ||u||^2_{x C^{2+a,\frac{a}{2}}_e(M_T)} \\
&\leq K C \mu ||u||_{x C^{2+a,\frac{a}{2}}_e(M_T)}.
\end{align*}
Taking $\mu$ sufficiently small allows us to force $K C \mu < \frac{1}{2}$.  So $||v_1||_{x C^{2+a,\frac{a}{2}}_e(M_T)} \leq \frac{\mu}{2}$.  Note that this same $\mu$ works if we shrink $T$.

Regarding $v_2$, note that this is a solution to 
\begin{equation*} \left\{ 
\begin{array}{ll} 
(\partial_t - L) v_2 &=  E, \\
                     v_2(0) &= 0. 
\end{array} \right.
\end{equation*}
We recall that $E$ and the coefficients of $L$ are smooth, time-independent and have bounded $0$-derivatives of all orders, and so by parabolic regularity any finite number of derivatives of $v_2$ are bounded.  Fixing any $\zeta$ we may write
\[ v_2(\zeta,t) = \int_0^t E(\zeta) + Lv_2(\zeta,s) ds, \; \; t \in [0,T). \]
We may now estimate the $x C^{2+a,1+\frac{a}{2}}_e(M_T)$ norm of $v_2$.  The $L^{\infty}$ norm of spatial derivatives may be controlled through the Schauder estimates by the norm of $E$, and can be made as small as we like by choosing $T$ sufficiently small.  Further, as the time derivative of $v_2$ is bounded and $v_2(x,0) = 0$, the $C^{a,\frac{a}{2}}_e(M_T)$ norm of $v_2$ can be made arbitrarily small by choosing $T$ sufficiently small.  We conclude for $T$ small enough
\[ ||v_2||_{x C^{2+a,\frac{a}{2}}(M_T)} \leq \frac{\mu}{2}. \]

Thus $\Psi: \sZ_{\mu,T} \longrightarrow \sZ_{\mu,T}$ for $t \in [0, T]$.
\end{proof}

\begin{lemma} For the $\mu$ and $T$ specified in the previous lemma, 
$ \Psi: \sZ_{\mu,T} \longrightarrow \sZ_{\mu,T} $
is a contraction.
\end{lemma}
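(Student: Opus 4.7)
The plan is to run the same Schauder-type estimate as in the preceding lemma, but now applied to the difference $\Psi u_1 - \Psi u_2$. First I would fix $u_1, u_2 \in \sZ_{\mu,T}$ and observe that by the definition of $\Psi$ the inhomogeneous term $E$ cancels, so
\[ w := \Psi u_1 - \Psi u_2 = \int_0^t e^{(t-s)L}\bigl( \sQ(u_1) - \sQ(u_2) \bigr)\, ds. \]
In particular $w$ solves the Cauchy problem
\[ (\partial_t - L) w = \sQ(u_1) - \sQ(u_2), \qquad w(0) = 0, \]
with right-hand side lying in $x C_e^{a,\frac{a}{2}}(M_T)$ by \eqref{eqn:estforQ2}.

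Next I would apply Theorem \ref{thm:schauder} with weight $\mu = 1$ to obtain
\[ \|w\|_{x C^{2+a,\frac{a}{2}}_e(M_T)} \leq \|w\|_{x C^{2+a,1+\frac{a}{2}}_e(M_T)} \leq K \, \| \sQ(u_1) - \sQ(u_2) \|_{x C^{a,\frac{a}{2}}_e(M_T)}, \]
and then invoke the Lipschitz estimate \eqref{eqn:estforQ2} for $\sQ$, which gives
\[ \|\sQ(u_1) - \sQ(u_2)\|_{x C^{a,\frac{a}{2}}_e(M_T)} \leq C \max\bigl\{ \|u_1\|_{x C^{2+a,\frac{a}{2}}_e(M_T)}, \|u_2\|_{x C^{2+a,\frac{a}{2}}_e(M_T)} \bigr\} \, \|u_1 - u_2\|_{x C^{2+a,\frac{a}{2}}_e(M_T)}. \]
Since $u_1, u_2 \in \sZ_{\mu,T}$, each of the norms on the right is at most $\mu$, and chaining the two estimates yields
\[ \|\Psi u_1 - \Psi u_2\|_{x C^{2+a,\frac{a}{2}}_e(M_T)} \leq K C \mu \, \|u_1 - u_2\|_{x C^{2+a,\frac{a}{2}}_e(M_T)}. \]

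Finally I would note that the constant $K$ from Theorem \ref{thm:schauder} remains bounded as $T \to 0$, and $\mu$ was already chosen in the previous lemma so that $KC\mu < \tfrac{1}{2}$. Thus $\Psi$ is a contraction of $\sZ_{\mu,T}$ with Lipschitz constant at most $\tfrac{1}{2}$, and the Banach fixed point theorem then delivers the unique fixed point $v \in \sZ_{\mu,T}$ solving \eqref{eqforv2}. There is no serious obstacle here: the only point requiring care is that the constants $K$ (from the Schauder estimate) and $C$ (from the quadratic estimates) are uniform in $T$ on $[0, T_0]$, and that the previous lemma's choice of $\mu$ controls the contraction constant independently of $T$; both were already arranged above.
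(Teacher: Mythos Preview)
Your proof is correct and follows essentially the same route as the paper: cancel the $E$ term, apply the Schauder estimate of Theorem~\ref{thm:schauder} to the difference, invoke the Lipschitz bound \eqref{eqn:estforQ2} for $\sQ$, and use $KC\mu < \tfrac12$ from the previous lemma. The only extra content you add is the concluding remark about the Banach fixed point theorem, which the paper defers to the statement of Theorem~\ref{theorem:existence}.
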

\begin{proof}
Schauder's estimate applied to $\Psi u - \Psi v$ implies
\begin{align*}
||\Psi u &- \Psi v||_{x C^{2+a,\frac{a}{2}}_e(M_T)} \\
&\leq ||\Psi u - \Psi v||_{x C^{2+a,1+\frac{a}{2}}_e(M_T)} \\
&\leq K ||\sQ u - \sQ v||_{x C^{a,\frac{a}{2}}_e(M_T)} \\
&\leq K C \max\{ ||u||_{x C^{2+a,\frac{a}{2}}_e(M_T)}, ||v||_{x C^{2+a,\frac{a}{2}}_e(M_T)} \} ||u-v||_{x C^{2+a,\frac{a}{2}}_e(M_T)}\\
&\leq K C \mu ||u-v||_{x C^{2+a,\frac{a}{2}}_e(M_T)}.
\end{align*}

Where $K$ and $C$ are the same constants from the previous proof.  Consequently $K C \mu < \frac{1}{2}$, and $\Psi$ is a contraction.
\end{proof}

We are now ready to prove the existence of a solution to the Ricci-DeTurck flow with full $0$-regularity.
\begin{theorem} \label{theorem:existence}
If $h$ is a smoothly conformally compact metric, then there exists $T>0$ and a solution $g \in C_e^{\infty,\infty}(M_T)$ to \eqref{NRDTF2}.
\end{theorem}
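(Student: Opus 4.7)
The two preceding lemmas verify the hypotheses of Banach's fixed point theorem on the closed subset $\sZ_{\mu,T} \subset xC_e^{2+a,a/2}(M_T)$, yielding a unique fixed point $v = \Psi v$. Since $\sQ(v) + 2E \in xC_e^{a,a/2}(M_T)$, the Schauder estimate of Theorem~\ref{thm:schauder} applied to $(\partial_t - L) v = \sQ(v) + 2E$ with zero initial data immediately upgrades the time regularity, so that $v \in xC_e^{2+a,(2+a)/2}(M_T)$. Setting $g = h + v$ then produces a $C_e^{2+a,(2+a)/2}$ solution of \eqref{NRDTF2} on $M_T$.

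To promote $g$ to the class $C_e^{\infty,\infty}(M_T)$, I would bootstrap the regularity by induction on $k$, showing $v \in xC_e^{k+a,(k+a)/2}(M_T)$ for every $k \in \bN$. Given $v$ at level $k$, differentiating the equation $(\partial_t - L) v = \sQ(v) + 2E$ by an arbitrary $0$-vector field $V_0$ yields
\[
(\partial_t - L)(V_0 v) \;=\; V_0 \sQ(v) + 2\, V_0 E - [L, V_0]\, v.
\]
Because $L$ has smooth, time-independent coefficients, $[L,V_0]$ is a $0$-operator of order at most two with smooth coefficients, and by the inductive hypothesis combined with Lemma \ref{lemma:mappingforQ} the right-hand side lies in the appropriate $0$-Hölder space. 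An application of Proposition \ref{parabolicregularity}, viewing the equation as linear in $V_0 v$ with coefficients of known Hölder regularity after reabsorbing the top-order contribution of $\sQ$ into the quasilinear operator $\widetilde L_v = (h+v)^{ab} \nablah_a \nablah_b$, then gives $V_0 v \in xC_e^{2+a,(2+a)/2}$. Iterating over all $0$-vector fields produces the spatial gain, and the matching time regularity follows from expressing $\partial_t v$ via the equation itself.

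The main obstacle is carrying through this quasilinear bootstrap. The nonlinearity $\sQ$ costs two $0$-derivatives, so a straightforward iteration of the parabolic regularity proposition produces no net gain. The resolution is to view the quasilinear equation as linear in the highest derivative with Hölder coefficients, and to invoke Proposition \ref{parabolicregularity} using the current Hölder regularity of $v$ to control those coefficients. Careful tracking of the $0$-commutators, together with the decay in $x$ gained by $\sQ$ recorded in Lemma \ref{lemma:mappingforQ}, then closes the induction and delivers $v \in C_e^{\infty,\infty}(M_T)$.
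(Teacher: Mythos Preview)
Your proposal is correct and follows essentially the same strategy as the paper. The paper's presentation of the bootstrap is slightly more direct: rather than differentiating the equation for $v$ and then reabsorbing the top-order part of $\sQ$, it writes the full system \eqref{NRDTF2} for $g=h+v$ abstractly as $\partial_t g + \sum_{|\beta|\le 2} a_\beta(h,g)\,D^\beta g = 0$, observes that the coefficients (which depend on $g$ and $\nablah g$) satisfy $D^\gamma a_\beta \in C_e^{a,a/2}$ for $|\gamma|\le 1$ once $g \in C_e^{2+a,(2+a)/2}$, and then invokes Proposition~\ref{parabolicregularity} directly to gain one spatial derivative; iterating and reading off time regularity from the equation finishes. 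Your ``differentiate, then reabsorb into $\widetilde L_v$'' step is exactly this maneuver unpacked by hand, so the two arguments coincide.
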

\begin{proof}
The existence of a solution to $\eqref{eqforv}$ in $\sZ_{\mu, T}$ follows from the Banach fixed point theorem.  The Schauder estimate applied to the fixed point equation shows that the solution lies in $C_e^{2+a,\frac{2+a}{2}}(M_T)$. This short-time solution yields a solution in the same space to the Ricci-DeTurck flow by taking $g = h+v$.  We now improve the regularity by using a bootstrap procedure, applied to the system \eqref{NRDTF2}.  We may write this abstractly as
\[ \partial_t g + \sum_{|\beta|=0}^2 a_{\beta}(h,g) D^{\beta} g, \]
where the coefficients $a_{\beta}$ at worst satisfy $D^{\gamma} a_{\beta} \in C^{a,\frac{a}{2}}(M_T)$, for $|\gamma|=1$.  By parabolic regularity (c.f. Proposition \ref{parabolicregularity}) we conclude $D^{\gamma} g \in C_e^{2+a,\frac{2+a}{2}}(M_T)$ for all $|\gamma|=1$, which allows us to improve the spatial regularity.  By bootstrapping, and then using the equation to improve regularity in time, we find $g \in C_e^{\infty,\infty}(M_T)$.
\end{proof}

\section{Beyond $0$-regularity} \label{section:regularity} 

In the previous section we proved short-time existence of the Ricci-DeTurck flow starting at a smoothly conformally compact metric.  The solution was constructed in $0$-H\"older spaces and is smooth in time and $0$-derivatives.  We can expect more as the inhomogeneous terms in equation \eqref{eqforv2} are smooth up to the boundary with respect to background derivatives.  In this section we prove the solution remains smoothly conformally compact on the entire interval of existence.  The arguments of this section are modeled on the arguments in \cite{MazzeoYamabe}.

We begin by writing \eqref{eqforv2} more compactly as
\begin{equation} \label{eqforv3}
 v = HE + H \sQ v, 
\end{equation}
where $H$ is the time convolution of the heat operator appearing in \eqref{eqforv2}.

We begin by documenting the mapping properties of $H$ on the $C^{k+a, \frac{k+a}{2}, l}(M_T)$ spaces.
\begin{lemma}
\[ H: x^{\mu} C^{k+a, \frac{k+a}{2}, l}(M_T) \longrightarrow x^{\mu} C^{k+a, \frac{k+a}{2}, l}(M_T), l \leq k. \]
\end{lemma}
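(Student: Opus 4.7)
The strategy is to commute the $b$-vector field $\partial_y$ through the time-convolved heat operator $H$. Since a section $u$ lies in $x^{\mu} C^{k+a, \frac{k+a}{2}, l}(M_T)$ iff $\partial_y^s u \in x^{\mu} C_e^{k-s+a, \frac{k-s+a}{2}}(M_T)$ for every $0 \leq s \leq l$, the lemma reduces to verifying that $\partial_y^s(Hf) \in x^{\mu} C_e^{k-s+a, \frac{k-s+a}{2}}(M_T)$ for each such $s$.

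\textbf{Step 1 (the $s = 0$ case).} One must show $Hf \in x^{\mu} C_e^{k+a, \frac{k+a}{2}}(M_T)$. Iterating Theorem~\ref{thm:schauder} together with the parabolic regularity of Proposition~\ref{parabolicregularity} gives this: each application of the Schauder estimate gains two $0$-derivatives, so in particular any fixed order of $0$-H\"older regularity is preserved. Equivalently, this mapping property can be read off directly from the polyhomogeneous structure of the heat kernel described in Section~\ref{section:parabolic}.

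\textbf{Step 2 (Leibniz-type expansion for $s \geq 1$).} Write
\[ \partial_y^s (Hf) = H(\partial_y^s f) + \sum_{j=0}^{s-1} \binom{s}{j} \bigl(\mathrm{ad}_{\partial_y}^{\,s-j} H\bigr) \partial_y^j f, \]
where $\mathrm{ad}_{\partial_y}^{\,k} H$ denotes the $k$-fold iterated commutator of $\partial_y$ with $H$. The first term lies in $x^{\mu} C_e^{k-s+a, \frac{k-s+a}{2}}(M_T)$ by Step~1 applied with $k$ replaced by $k-s$, using the hypothesis $\partial_y^s f \in x^{\mu} C_e^{k-s+a, \frac{k-s+a}{2}}(M_T)$. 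For each commutator term, Proposition~\ref{prop:commutator} (extended from the heat operator $A$ to its time convolution $H$ via integration in time) ensures that $\mathrm{ad}_{\partial_y}^{\,s-j} H$ inherits the mapping properties of $H$, in particular preserving $x^{\mu} C_e^{m+a, \frac{m+a}{2}}(M_T)$ for every $m$. Together with $\partial_y^j f \in x^{\mu} C_e^{k-j+a, \frac{k-j+a}{2}}(M_T)$ and the inclusion $C_e^{k-j+a, \frac{k-j+a}{2}} \subset C_e^{k-s+a, \frac{k-s+a}{2}}$ (since $j < s$), every term is absorbed into the target space.

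\textbf{Main obstacle.} The technical heart is justifying that iterated commutators of $\partial_y$ with $H$ inherit the mapping properties of $H$. This rests on the fact that conjugation by a $b$-vector field preserves the polyhomogeneous index family of the heat kernel at each boundary hypersurface of the heat space, so each such commutator is again in the heat calculus of the same order. The detailed kernel-level verification is deferred to the appendix, where Proposition~\ref{prop:commutator} is proved; the extension from $A$ to $H$ is then a routine consequence of the fact that time integration against a polyhomogeneous kernel preserves its index structure.
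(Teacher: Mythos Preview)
Your proposal is correct and follows essentially the same route as the paper: reduce to the $l=0$ case via the Schauder estimate, then commute $\partial_y$ through $H$ and invoke Proposition~\ref{prop:commutator} to control the commutator terms. The only cosmetic difference is that the paper treats the single-commutator case $l=1$ and then says ``the remainder of the proof proceeds by iteration,'' whereas you write out the full iterated-commutator (Leibniz) expansion explicitly; both arguments rely on the same two ingredients.
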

\begin{proof}
When $l = 0$, the mapping properties follow from the Schauder estimate, Theorem \ref{thm:schauder}.  The new content here is that $H$ preserves tangential regularity.

Suppose that $f \in x^{\mu} C^{k+a, \frac{k+a}{2}, 1}(M_T)$.  Taking an arbitrary $b$-derivative of $Hf$, we write
\[ \partial_y (H f) = H (\partial_y f) + [H, \partial_y] f. \]
Since $\partial_y f \in x^{\mu} C^{k-1+a, \frac{k-1+a}{2}}(M_T)$, and $H: x^{\mu} C^{k-1+a, \frac{k-1+a}{2}}(M_T) \longrightarrow x^{\mu} C^{k+1+a,\frac{k+1+a}{2}}_e(M_T)$, the first term lies again in $x^{\mu} C^{k+a,\frac{k+a}{2}}_e(M_T)$.  For the second term, by Proposition \ref{prop:commutator}, $[H, \partial_y]$ has the same mapping properties as $H$, and again maps $x^{\mu} C^{k+a,\frac{k+a}{2}}_e(M_T)$ to a subset of $x^{\mu} C^{k+a,\frac{k+a}{2}}_e(M_T)$.  This implies that $\partial_y (Hf) \in x^{\mu} C^{k+a,\frac{k+a}{2}}_e(M_T)$, and so
$H f \in C^{k+a,\frac{k+a}{2}, 1}_e(M_T)$.

The remainder of the proof proceeds by iteration.
\end{proof}

The next proposition shows that a solution to the normalized Ricci-DeTurck flow is smooth in $b$-derivatives.
\begin{prop} \label{prop:breg} Let $g$ be a solution to \eqref{NRDTF2} in $C^{k+a,\frac{k+a}{2}}_e(M_T)$, with $h$ smoothly conformally compact.  Then $g(t)$ lies in $C^{k+a, \frac{k+a}{2}, k}$ for all $t \in [0,T)$.
\end{prop}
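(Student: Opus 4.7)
The strategy is to bootstrap tangential regularity of $v := g - h$ using the integral equation $v = HE + H\sQ(v)$ from \eqref{eqforv2}, together with the smoothness of $HE$ up to the boundary (Proposition \ref{prop:polyhomog}), the preservation of tangential regularity by $H$ (preceding lemma), and the algebraic structure of $\sQ$ (Lemma \ref{lemma:mappingforQ}).

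I would proceed by induction on $l$, showing $v \in x C_e^{k+a, \frac{k+a}{2}, l}(M_T)$ for $l = 0, 1, \ldots, k$. The base case $l = 0$ is the hypothesis. For the inductive step, assuming the result at level $l-1$, the task is to show $\partial_y^l v \in x C_e^{k-l+a, \frac{k-l+a}{2}}(M_T)$. Applying $\partial_y^l$ formally to the integral equation and commuting past $H$ via
\[ \partial_y^l H = H \partial_y^l + \sum_{j=1}^{l} \binom{l}{j} [\partial_y^j, H] \partial_y^{l-j}, \]
combined with the fact (Proposition \ref{prop:commutator}) that each $[\partial_y^j, H]$ has the same mapping properties as $H$, reduces the problem to controlling $H(\partial_y^l \sQ(v))$. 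Expanding $\partial_y^l \sQ(v)$ by the Leibniz rule using the explicit formulas for $\sQ_1, \sQ_2, \sQ_3$ and isolating the terms in which all $l$ tangential derivatives fall on a single factor (which depend linearly on the unknown $w := \partial_y^l v$) produces a linear integral equation
\[ w = F + H(\cL w), \]
where $F \in x C_e^{k-l+a, \frac{k-l+a}{2}}(M_T)$ is determined by the inductive hypothesis---assembled from $\partial_y^l HE \in xC^\infty(\overline{M}_T)$, the commutator terms, and the part of $\partial_y^l \sQ(v)$ involving only $\partial_y^{<l} v$---while $\cL$ is a linear operator with coefficients built from $v$, $\nablah v$, $\nablah^2 v$.

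To solve this equation I would apply the parabolic Schauder estimate (Theorem \ref{thm:schauder}) and a contraction argument on $x C_e^{k-l+a, \frac{k-l+a}{2}}(M_T)$ analogous to Section \ref{section:ste}: since $H$ gains two $0$-derivatives while the coefficients of $\cL$ contain at most two $0$-derivatives of $v$, the composition $H \circ \cL$ is bounded and becomes contractive for $T$ sufficiently small (or by iterating on subintervals). To make the formal differentiation rigorous, I would first use tangential difference quotients in the $y$-variables in lieu of $\partial_y v$, derive uniform-in-step-size Schauder estimates, and pass to the limit. The principal obstacle is verifying that the coefficients of $\cL$ lie in function spaces compatible with the Schauder estimate at the $l$-th inductive step; this requires control of $v$ in $C_e^{k+a, \frac{k+a}{2}, l-1}$ (supplied by the induction) and is where the initial $0$-regularity of $v$ provided by Theorem \ref{theorem:existence} is expended as $l$ grows toward $k$.
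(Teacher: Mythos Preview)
Your approach differs substantially from the paper's, and the contraction step as you describe it has a gap.

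The paper avoids differentiating the equation or solving any auxiliary linear problem. Its key observation is that $\sQ$ \emph{improves the weight}: from $v \in x\,C_e^{k+a,\frac{k+a}{2}}$ one gets $\sQ v \in x^{2}\,C_e^{k-2+a,\frac{k-2+a}{2}}$, hence $H\sQ v \in x^{2}\,C_e^{k+a,\frac{k+a}{2}}$ by the Schauder estimate. The extra power of $x$ is then traded directly for a tangential derivative via the identity $\partial_y = x^{-1}(x\partial_y)$:
\[
\partial_y (H\sQ v) \;=\; x^{-1}\bigl(x\partial_y(H\sQ v)\bigr) \;\in\; x\,C_e^{k-1+a,\frac{k-1+a}{2}},
\]
so $H\sQ v \in x\,C^{k+a,\frac{k+a}{2},1}$. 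Iterating the integral equation (substituting $v=HE+H\sQ v$ back into itself and using Lemma~\ref{lemma:mappingforQ}) pushes the non-polyhomogeneous remainder to successively higher weight in $x$, and each extra factor of $x$ buys one more $\partial_y$. No linearization $\cL$ and no auxiliary fixed-point argument enter.

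The gap in your proposal is the assertion that $H\circ\cL$ ``becomes contractive for $T$ sufficiently small (or by iterating on subintervals).'' The Schauder constant $K$ of Theorem~\ref{thm:schauder} is merely \emph{bounded} as $T\to0$; it does not tend to zero. And the coefficients of $\cL$, built from $v,\nablah v,\nablah^2 v$, have fixed size in the norms relevant to $x\,C_e^{k-l+a,\frac{k-l+a}{2}}$---the bootstrapping in Theorem~\ref{theorem:existence} upgrades regularity, not smallness. So neither shrinking $T$ nor subdividing the time interval makes $w\mapsto H(\cL w)$ a contraction on the space you name. (In Section~\ref{section:ste} the contraction came from $\|v\|\le\mu$, not from $T$.) Your scheme can be salvaged by contracting first in $x\,C_e^{2+a,\frac{a}{2}}$, where the leading coefficient of $\cL$ is genuinely $O(\mu)$, and then invoking Proposition~\ref{parabolicregularity} to recover the higher regularity; but that is not the argument you gave, and in any case the paper's weight-trading device is both shorter and sidesteps the issue entirely.
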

\begin{proof}
Consider the term $HE$ in equation \eqref{eqforv3}.  As $h$ is smoothly conformally compact, $E \in x C^{\infty}(\Mbar_T)$.  Now $H$ preserves polyhomogeneity via Proposition \ref{prop:polyhomog}, and so $HE \in x C^{\infty}(\Mbar_T)$.  Thus we need only focus on the second term. 

In order to handle the term $H \sQ v$ we take advantage of the improved decay of $\sQ v$. 
If $v \in x C^{k+a,\frac{k+a}{2}}_e(M_T)$, then $\sQ v \in x^2 C^{k-2+a,\frac{k+a-2}{2}}_e(M_T)$, which $H$ maps to $x^2 C^{k+a,\frac{k+a}{2}}_e(M_T)$.  Consequently, taking any $b$-derivative of $H \sQ v$ yields
\[ \partial_y H \sQ v = x^{-1} \left(x \partial_y (H \sQ v) \right) \in x C^{k-1+a,\frac{k-1+a}{2}}_e(M_T), \]
which shows a gain of one tangential derivative so that
\[ H \sQ v \in x C^{k+a,\frac{k+a}{2}, 1}(M_T).  \]

This argument iterates $k$-times and we indicate the next step.  Apply equation \eqref{eqforv3} and Lemma \ref{lemma:mappingforQ} to obtain
\[ v = HE + H\sQ(HE + H\sQ v) \in x C^{\infty}(\Mbar_T) + x^3 C^{k+a,\frac{k+a}{2},1}(M_T), \]
and so $v \in x C^{k+a,\frac{k+a}{2}, 2}(M_T)$.
\end{proof}

We now state the main regularity result of this paper.
\begin{theorem} \label{theorem:reg} Let $g$ be a solution to \eqref{NRDTF2} in $C^{\infty,\infty}_e(M_T)$, with $h$ smoothly conformally compact.  Then $g(t)$ is smoothly conformally compact for all $t \in [0,T)$.
\end{theorem}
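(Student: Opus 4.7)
The plan is to iterate on the integral equation \eqref{eqforv3}, namely $v = HE + H\sQ(v)$, in order to upgrade $v = g-h$ from a solution with full $0$-regularity (Theorem \ref{theorem:existence}) and full $b$-regularity (Proposition \ref{prop:breg}) to one that lies in $xC^\infty(\Mbar_T)$. The strategy is to successively decompose $v$ as a piece smooth up to the boundary plus a remainder vanishing to higher and higher order at $\partial M$.

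For the base step, since $h$ is smoothly conformally compact, $E \in xC^\infty(\Mbar_T)$, so by Proposition \ref{prop:polyhomog} we have $HE \in xC^\infty(\Mbar_T)$. Also, $v \in xC_e^{\infty,\infty}(M_T)$, so Lemma \ref{lemma:mappingforQ} gives $\sQ(v) \in x^2 C_e^{\infty,\infty}(M_T)$ and hence $H\sQ(v) \in x^2 C_e^{\infty,\infty}(M_T)$. This yields a decomposition $v = v_s^{(1)} + v_r^{(1)}$ with $v_s^{(1)} := HE \in xC^\infty(\Mbar_T)$ and $v_r^{(1)} \in x^2 C_e^{\infty,\infty}(M_T)$. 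For the inductive step, assume $v = v_s^{(k)} + v_r^{(k)}$ with the smooth part in $xC^\infty(\Mbar_T)$ and the remainder in $x^{k+1} C_e^{\infty,\infty}(M_T)$. The second bullet of Lemma \ref{lemma:mappingforQ} (applied with $\nu = 1$, $\mu = k+1$) then yields
\[ \sQ(v) \in x^2 C^\infty(\Mbar_T) + x^{k+2} C_e^{\infty,\infty}(M_T), \]
and applying $H$, which preserves the smooth piece by Proposition \ref{prop:polyhomog} and the weighted $0$-H\"older piece by Theorem \ref{thm:schauder}, preserves this form. Hence $v = HE + H\sQ(v) \in xC^\infty(\Mbar_T) + x^{k+2} C_e^{\infty,\infty}(M_T)$, advancing the induction.

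Letting $k \to \infty$ shows that $v$ admits a formal polyhomogeneous expansion at $\partial M$ in non-negative integer powers of $x$ with remainders vanishing to arbitrary finite order. Combined with the full tangential $b$-regularity of $v$ from Proposition \ref{prop:breg} and smoothness in $t$, an argument in the style of \cite{MazzeoYamabe} then upgrades this to $v \in xC^\infty(\Mbar_T)$, so that $g(t) = h + v(t)$ is smoothly conformally compact for every $t \in [0, T)$. The main obstacle is the final step, namely passing from the chain of decompositions (together with the $b$-regularity from Proposition \ref{prop:breg}) to genuine smoothness of $v$ up to $\partial M$ in the classical sense; this requires carefully leveraging the polyhomogeneous asymptotics together with the tangential and time regularity to control all mixed classical derivatives $\partial_x^j \partial_y^\beta \partial_t^i v$ uniformly up to the boundary.
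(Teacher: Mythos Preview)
Your proposal is correct and follows essentially the same iterative strategy as the paper: both use the integral equation $v = HE + H\sQ(v)$ together with the mapping properties of $H$ and Lemma~\ref{lemma:mappingforQ} to decompose $v$ as a piece in $xC^\infty(\Mbar_T)$ plus a remainder vanishing to successively higher order, and then conclude $v \in xC^\infty(\Mbar_T)$. The only organizational difference is that the paper invokes Proposition~\ref{prop:breg} at the outset and carries the remainders in $x^{k}C^{\infty,\infty,\infty}(M_T)$, whereas you carry the remainders in $x^{k}C_e^{\infty,\infty}(M_T)$ and appeal to $b$-regularity only at the end; since $v_r^{(k)} = v - v_s^{(k)}$ inherits the full tangential regularity of $v$ anyway, this distinction is cosmetic, and the ``main obstacle'' you flag (passing from the chain of decompositions to genuine smoothness) is handled identically---and with the same level of informality---in the paper.
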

\begin{proof}
By Proposition \ref{prop:breg} we have that $g(t)$ is fully tangentially regular, i.e. $v \in x C^{\infty,\infty,\infty}(M_T)$.  It remains to show that $v$ is smooth up to the boundary, i.e. $v \in x C^{\infty}(\Mbar_T)$.

We will now use the structure of the heat kernel as a polyhomogeneous distribution to prove polyhomogeneity of $v$.  Given that $v \in x C^{\infty,\infty,\infty}(M_T)$ satisfies
\[ v = HE + H \sQ v, \]
We see that $\sQ v \in x^2 C^{\infty,\infty,\infty}(M_T)$, and so we may decompose $v$ as
\[ v = v' + v'' \in x C^{\infty}(\Mbar_T) + x^2 C^{\infty,\infty,\infty}(M_T). \]
We now insert this back into \eqref{eqforv3}.  Using Lemma \ref{lemma:mappingforQ}, we find
\[ \sQ( v' + v'' ) = x^2 C^{\infty}(\Mbar_T) + x^3 C^{\infty,\infty,\infty}(M_T). \]
Equation \eqref{eqforv3} now lets us conclude
\[ v \in x C^{\infty}(\Mbar_T) + x^3 C^{\infty,\infty,\infty}(M_T). \]
Iterating we conclude $v \in x C^{\infty}(\Mbar_T)$.

Finally, we have proved $v \in x C^{\infty}(\Mbar_T)$, i.e.
that 
\[ v = x \overline{v}_{ij} \frac{dx^i}{x} \frac{dx^j}{x}, \]
where $\overline{v}_{ij}$ is smooth up to the boundary, and we remind the reader of the convention $(x^0, x^1, \cdots, x^n) = (x, y^1, \cdots, y^n)$.  So now $x^2 v = x \overline{v}_{ij}dx^i dx^j$ and consequently $g = h+v$ is smoothly conformally compact. 
\end{proof} 

We note that $\gbar = x^2 g = (\overline{h}_{ij} + x \overline{v}_{ij} ) dx^i dx^j$, so that $|dx|^2_{\gbar} = 1$ on $\partial M$, so that $g$ is asymptotically hyperbolic.  Combining Theorem \ref{theorem:existence} and Theorem \ref{theorem:reg} completes the proof of Theorem \ref{theorem:main}. 

We conclude this section by remarking that it is possible to relax the condition that the initial metric $h$ be smoothly conformally compact.  Our entire argument applies to certain polyhomogeneous initial metrics as well.  The heat kernel analysis from the Appendix and the heat kernel mapping properties extend to this case in a straightforward manner.

\section{Stability about admissible Einstein metrics} \label{section:stability}

In this section we adapt the arguments from \cite{SSS} to prove stability of the normalized Ricci-DeTurck flow near $\eta$-admissible Einstein metrics.  

The main idea is to replace the background hyperbolic metric on the ball used in \cite{SSS} with an $\eta$-admissible Einstein metric, $h$.  The existence arguments in \cite[Theorem 2.4]{SSS} works with a such a complete Einstein metric in place of the hyperbolic metric.  In order to obtain the long-time existence and convergence result, we must adapt the $L^2$ estimate for the difference of the solution of the flow with the background metric due to the `curvature error' terms, whose magnitude is measured by $\eta$, that arise.  There is sufficient room in the main estimate to allow for metrics with $\eta$ sufficiently small.  To this end, we replace \cite[Lemma 2.2]{SSS} with the following

\begin{lemma} \label{lemma:normest}
Suppose that $g \in \sM^{\infty}(\bB^{n+1}, (0,T) )$ is a solution to the normalized Ricci-DeTurck flow which is $\varepsilon$-close to $h$, where $h$ is an $\eta$-admissible Einstein metric and $\vep$ is sufficiently small.  Then
\begin{equation}
\partial_t |g-h|^2 \leq g^{ij} \nablah_i \nablah_j |g-h|^2 - (2-\varepsilon) |\nablah (g-h)|^2 + (4 + \varepsilon + b(n) \eta) |g-h|^2,
\end{equation}
where $b(n)$ is a constant depending on $n$, and where any constant $c(n) \varepsilon$ is replaced with $\varepsilon$, and norms and covariant derivatives are with respect to $h$.
\end{lemma}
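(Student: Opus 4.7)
The plan is to redo the $h =$ hyperbolic calculation of \cite[Lemma 2.2]{SSS} with $h$ an arbitrary $\eta$-admissible Einstein metric, carefully tracking the extra contributions that arise from the curvature error $E := \Rh + R^{cc}$, which by hypothesis satisfies $|E|_h \leq \eta$ pointwise on $\bB^{n+1}$.

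First, writing $w := g - h$ and using that $h$ is time-independent with $\nablah h = 0$, equation \eqref{NRDTF2} rearranges to
\begin{align*}
\partial_t w_{ij} = g^{ab} \nablah_a \nablah_b w_{ij} - 2n g_{ij} - g^{ab} g_{ip} h^{pq} \Rh_{jaqb} - g^{ab} g_{jp} h^{pq} \Rh_{iaqb} + \sQ_3(w),
\end{align*}
where $\sQ_3(w)$ is the gradient-quadratic remainder from Section \ref{section:ste}. Since $h^{-1}$ is time-independent, $\partial_t |w|^2_h = 2 w^{ij} \partial_t w_{ij}$ with indices raised by $h$. The Bochner-type identity
\begin{align*}
g^{ab} \nablah_a \nablah_b |w|^2 = 2 g^{ab} w^{ij} \nablah_a \nablah_b w_{ij} + 2 g^{ab} h^{ik} h^{jl} \nablah_a w_{ij} \nablah_b w_{kl}
\end{align*}
handles the principal part; since $g$ is $\vep$-close to $h$, the second term equals $(2 + O(\vep))|\nablah w|^2$ and supplies the $-(2 - \vep)|\nablah w|^2$ contribution after rearranging.

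For the algebraic piece, I would substitute $\Rh_{ijkl} = - R^{cc}_{ijkl} + E_{ijkl}$ together with $g = h + w$ into the two curvature terms and expand using \eqref{exp-inv}. The $R^{cc}$-parts combine with $-2n g_{ij}$ exactly as in the hyperbolic computation of \cite{SSS}: the Einstein identity $\Rch_{ij} = - n h_{ij}$ ensures that the traces collapse in the same way, and after contracting with $w^{ij}$ one obtains $4|w|^2$ modulo remainders cubic in $w$ that can be absorbed into $\vep |w|^2$. Each occurrence of $E$ yields a contribution of the schematic form $g^{ab} g_{ip} h^{pq} E_{jaqb} w^{ij}$; by Cauchy--Schwarz together with $|E|_h \leq \eta$ and $|g|_h \leq 1 + \vep$, these aggregate to a bound of the form $b(n) \eta |w|^2$ for some $b(n)$ depending only on dimension.

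Finally, $\sQ_3(w)$ contracted with $w^{ij}$ has the form $(g^{-1})^2 * \nablah w * \nablah w * w$, and by $|w|_h \leq \vep$ is bounded by $c(n) \vep |\nablah w|^2$, which is absorbed into the gradient term after adjusting $\vep$. The main obstacle is not any single estimate but the algebraic bookkeeping of the expansion of $(h+w)^{ab}(h+w)_{ip} h^{pq}(- R^{cc}_{jaqb} + E_{jaqb})$: one must cleanly separate the hyperbolic-background part (which reproduces the \cite{SSS} computation verbatim, using only the Einstein relation and not the stronger hyperbolic identity) from the $E$-contributions, which are then controlled by the crude pointwise bound $|E|_h \leq \eta$.
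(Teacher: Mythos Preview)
There is a genuine gap in your treatment of the $E$-terms. The contraction $2w^{ij}\, g^{ab} g_{ip} h^{pq} E_{jaqb}$ is only \emph{linear} in $w$: a crude Cauchy--Schwarz bound using $|E|_h\leq\eta$ and $|g|_h\leq 1+\vep$ yields at best $c(n)\,\eta\,|w|$, not $c(n)\,\eta\,|w|^2$. A term of order $\eta|w|$ cannot be absorbed into the differential inequality in a way that produces exponential decay of $\int|w|^2$ downstream; the quadratic dependence is essential.

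The paper obtains the missing factor of $|w|$ by invoking the Einstein condition a second time, now on $E$ rather than on the $R^{cc}$-part. Working at a point in $h$-normal coordinates with $g$ diagonalized ($g_{ij}=\lambda_i\delta_{ij}$), the $E$-contribution to $\partial_t|w|^2$ becomes
\[
-4\sum_i(\lambda_i-1)\,\lambda_i\sum_j\lambda_j^{-1}E_{jiij}.
\]
Since $h$ is Einstein one has $\sum_j E_{jiij}=0$ (the Ricci-trace of $E$ vanishes), so the inner sum may be rewritten as $\sum_j(\lambda_j^{-1}-1)E_{jiij}$; this supplies a second factor $(\lambda_j-1)$ and hence the bound $b(n)\,\eta\,|w|^2$. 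Your sketch uses the Einstein identity only to reproduce the $R^{cc}$-computation of \cite{SSS}, but it is equally indispensable --- and not recoverable from Cauchy--Schwarz alone --- for the $E$-estimate. Equivalently, in your covariant language: the zeroth-order-in-$w$ part of $g^{ab}g_{ip}h^{pq}E_{jaqb}$ is $h^{ab}E_{jaib}$, which vanishes precisely because $h$ is Einstein; only after noting this does the remainder carry the extra factor of $w$ you need.
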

\begin{proof}
Choose $h$-normal coordinates at any point $p$ and write $h_{ij} = \delta_{ij}$, then diagonalize $g$ at $p$ to write $g_{ij} = \lambda_i g_{ij}$, where $\lambda_i > 0$.  Note that we drop Einstein summation convention as we use this expression in the remainder of the proof.

Note that for a metric $g$ $\vep$-close to $h$, the eigenvalues of $g$ with respect to $h$ satisfy $(1+\vep)^{-1} \leq \lambda_i \leq 1+\vep$. 

Set $Z = g - h$.  We compute using equation \eqref{NRDTF2}
\begin{align*}
\partial_t |Z|^2 &= 2 \sum_i (g_{ii} - h_{ii}) \partial_t g_{ii} \\
&= 2 \sum_i (g_{ii}-h_{ii}) \left( g^{ab} \nablah_a \nablah_b g_{ii} + 2 g^{ab} g_{ip} h^{pq} E_{iaqb} \right. \\
& \left. + 2 (h_{ii} - g_{ii}) + 2 g_{ii} g^{ab} (h_{ab} - g_{ab}) 
+ [g^{-1} * g^{-1} * \nablah g * \nablah g]_{ii} \right),
\end{align*}
where $E$ represents the curvature deviation from hyperbolic space: $E = R + R^{cc}$.

We now estimate exactly as in \cite{SSS}.  The only new ingredient are the curvature error terms.  Given an index $i$, we find the expression for $2 \sum_i (g_{ii} - h_{ii}) \partial_t g_{ii}$ contains
\begin{align*}
2 \sum_i 2 (g_{ii} - h_{ii}) g^{ab} g_{ip} h^{pq} E_{iaqb} &= 4 \sum_{i,q} (g_{ii} - h_{ii}) g^{ab} g_{iq} E_{iaqb}\\
&= 4 \sum_i (\lambda_i - 1) \lambda_i g^{ab} E_{iaib} \\
&= -4 \sum_i (\lambda_i - 1) \lambda_i g^{ab} E_{a i i b} \\
&= -4 \sum_i (\lambda_i - 1) \lambda_i \sum_j \lambda_j^{-1} E_{j i i j} \\
&= -4 \sum_i (\lambda_i - 1) \lambda_i \sum_j (\lambda_j^{-1} - 1 )E_{j i i j} 
\end{align*}
where in the last line we used the fact that since $h$ is Einstein $\sum_j E_{ijji} = 0$.  Noting from symmetries of the curvature tensor that $E_{iiii} = 0$, we find
\begin{align*}
-4 \sum_i (\lambda_i - 1) \lambda_i \sum_j (\lambda_j^{-1} - 1 )E_{j i i j} 
&= 4 \sum_{i \neq j} \lambda_i \lambda_j^{-1} (\lambda_i - 1) (\lambda_j - 1) E_{i j j i} 
\end{align*}
Estimating using the $h$-norm yields
\begin{align*}
\left|-4 \sum_i (\lambda_i - 1) \lambda_i \sum_j (\lambda_j^{-1} - 1 )E_{j i i j} \right|_h
&= \left|4 \sum_{i \neq j} \lambda_i \lambda_j^{-1} (\lambda_i - 1) (\lambda_j - 1) E_{i j j i} \right|_h \\
& \leq \left( \max_{i,j} |E_{ijji}|_h \right) b(n) |Z|^2 \\
& \leq \eta b(n) |Z|^2.
\end{align*}
\end{proof}

In order to proceed we must replace the McKean inequality used in \cite{SSS} for the infinimum of the $L^2$ spectrum of the hyperbolic Laplacian on functions with its counterpart for a conformally compact Einstein metric.  By a result of Lee \cite{LeeSpectrum}, if $h$ is at least $C^{3,\alpha}$ conformally compact with smooth conformal infinity, and the conformal infinity has positive Yamabe invariant, then $\lambda_0(h) = n^2/4$. 

We now obtain $L^2$ control of $|g(t)-h|$ by modifying \cite[Theorem 3.1]{SSS}.  Let $\eta(n) = \frac{1}{8 b(n)}$, where $b(n)$ is the constant appearing in the previous lemma.  We assume that for $0 < \eta < \eta(n)$ that we have a fixed $\eta$-admissible Einstein metric.

\begin{theorem} \label{theorem:SSSL2est}
Let $n \geq 3$.  There exists $\delta_0 = \delta_0(n) > 0$ such that if $g \in \sM^{\infty}(B_R, [0,T))$ is a solution to the normalized Ricci-DeTurck flow with $g = h$ on $\partial B_R(0) \times [0,T)$ and $\sup_{B_R(0) \times [0,T)} |g-h| \leq \delta_0$, then
\[ \int_{B_R(0)} |g(t) - h|^2_h \dvolh \leq e^{-at} \int_{B_R(0)} |g(0) - h|^2_h \dvolh. \]
$a \geq 1/4$.
\end{theorem}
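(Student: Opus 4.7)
The plan is to integrate the pointwise inequality from Lemma~\ref{lemma:normest} over $B_R(0)$ against $\dvolh$ and convert it, via integration by parts and Lee's spectral gap, into an ODE inequality of the form $\frac{d}{dt}\int|Z|^2 \leq -a\int|Z|^2$, with $Z := g-h$. The Dirichlet condition $g=h$ on $\partial B_R(0)$ will make every boundary term drop out cleanly, and the smallness of $\eta$ and $\delta_0$ will provide the numerical slack needed to reach $a\geq 1/4$.

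First I would set $u(t) := \int_{B_R(0)} |Z(t)|_h^2 \dvolh$ and, using Lemma~\ref{lemma:normest}, write
\[
\frac{d}{dt} u(t) \leq \int_{B_R(0)} g^{ij} \nablah_i \nablah_j |Z|^2 \dvolh - (2-\vep)\int_{B_R(0)} |\nablah Z|^2 \dvolh + (4+\vep+b(n)\eta)\, u(t).
\]
For the second-order term I split $g^{ij}=h^{ij}+(g^{ij}-h^{ij})$. Since $Z=0$ on $\partial B_R(0)$ one has $\nablah|Z|^2 = 2\langle Z,\nablah Z\rangle_h = 0$ on the boundary, so integrating $\Delta^h|Z|^2$ yields zero by the divergence theorem. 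For the remainder term, a single integration by parts (whose boundary piece vanishes because $g^{ij}-h^{ij}=0$ on $\partial B_R(0)$) gives
\[
\int_{B_R(0)}(g^{ij}-h^{ij})\nablah_i\nablah_j|Z|^2 \dvolh = -\int_{B_R(0)}(\nablah_i g^{ij})\,\nablah_j|Z|^2 \dvolh,
\]
which, using $\nablah g = \nablah Z$, Cauchy--Schwarz and $\sup|Z|\leq\delta_0$, is bounded by $C(n)\delta_0\int|\nablah Z|^2\dvolh$.

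Next I invoke Lee's theorem \cite{LeeSpectrum}: for an $\eta$-admissible Einstein metric, $\lambda_0(h)=n^2/4$. Because $Z$ vanishes on $\partial B_R(0)$, it extends by zero to a (Lipschitz) compactly supported section on $\bB^{n+1}$, and Kato's inequality $|\nablah|Z||^2\leq|\nablah Z|^2$ together with the spectral bound gives
\[
\int_{B_R(0)}|\nablah Z|^2 \dvolh \;\geq\; \int_{B_R(0)}|\nablah |Z||^2 \dvolh \;\geq\; \frac{n^2}{4}\int_{B_R(0)}|Z|^2 \dvolh.
\]
Combining the three preceding displays yields
\[
\frac{d}{dt}u(t) \leq \left[-(2-\vep-C(n)\delta_0)\,\frac{n^2}{4} + 4+\vep+b(n)\eta\right] u(t).
\]
For $n\geq 3$ the bracket is at most $-\frac{9}{2}+4+O(\vep+\delta_0)+b(n)\eta = -\frac{1}{2}+b(n)\eta+O(\vep+\delta_0)$; the hypothesis $\eta<\eta(n)=1/(8b(n))$ gives $b(n)\eta<1/8$, and choosing $\delta_0=\delta_0(n)$ small enough absorbs the remaining $O(\vep+\delta_0)$ slack to leave a coefficient $\leq -1/4$. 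Gronwall then produces $u(t)\leq e^{-at}u(0)$ with $a\geq 1/4$.

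The main obstacle I anticipate is verifying the error term coming from $(g^{ij}-h^{ij})\nablah_i\nablah_j|Z|^2$ is genuinely absorbable, i.e.\ controlled by $\delta_0$ times the good term $\int|\nablah Z|^2$ without any stray contribution that would need to be subtracted from the spectral bound --- this requires using $\nablah h=0$ to rewrite $\nablah g^{ij}$ as a contraction of $\nablah Z$, plus the vanishing of the relevant boundary integral. The rest is dimensional bookkeeping; the choice $\eta(n)=1/(8b(n))$ is exactly what leaves room for the $1/2-1/8=3/8 > 1/4$ margin from Lee's $n^2/4$ spectral bound at $n=3$.
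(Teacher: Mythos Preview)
Your proposal is correct and follows essentially the same approach as the paper: integrate Lemma~\ref{lemma:normest}, handle the second-order term by integration by parts (absorbing the error into a small multiple of $\int|\nablah Z|^2$), apply Kato plus Lee's spectral bound $\lambda_0(h)=n^2/4$, and conclude by Gronwall. The only cosmetic difference is that the paper hides your explicit splitting $g^{ij}=h^{ij}+(g^{ij}-h^{ij})$ behind the phrase ``integrating by parts and estimating as in \cite{SSS}'' and the convention that $c(n)\varepsilon$ is rewritten as $\varepsilon$; your $C(n)\delta_0$ term is precisely the error they sweep into that convention.
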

\begin{proof}
Assume that $\delta_0$ is so small that $g$ is $\vep$-close to $h$.

We compute using the Lemma \ref{lemma:normest},
\begin{align*}
\partial_t \int_{B_R(0)} |Z|^2 \dvolh & \leq \int_{B_R(0)} g^{ij} \nablah_i \nablah_j |Z|^2 - (2-\varepsilon) |\nablah Z|^2 + (4 + \varepsilon + \eta) |Z|^2  \dvolh.
\end{align*}

After integrating by parts and estimating as in \cite{SSS}, we obtain

\begin{align*}
\partial_t \int_{B_R(0)} |Z|^2 \dvolh & \leq \int_{B_R(0)} - (2-\varepsilon) |\nablah Z|^2 + (4 + \varepsilon + b(n) \eta) |Z|^2  \dvolh,
\end{align*}

We now use the eigenvalue estimates described above.  Kato's inequality implies that $| \nablah |Z| |^2 \leq |\nablah Z|^2$.  Further,
\begin{align*}
\int_{B_R(0)} |\nablah |Z||^2 \dvolh \geq \lambda_0(B_R(0)) \int_{B_R(0)} |Z|^2 \dvolh
\end{align*}
where $\lambda_0(B_R(0)) \geq \lambda_0(h) = \frac{n^2}{4}$.
So now,
\begin{align*}
\int_{B_R(0)} - (2-\varepsilon) |\nablah Z|^2 & \leq \int_{B_R(0)} - (2-\varepsilon) | \nablah |Z| |^2 \dvolh \\
&\leq \int_{B_R(0)} - (2-\varepsilon) \frac{n^2}{4} |Z|^2 \dvolh \\
&= \left(-\frac{n^2}{2} + \vep \right) \int_{B_R(0)} |Z|^2 \dvolh.
\end{align*}
Finally, we obtain
\[ \partial_t F \leq \left(-\frac{n^2}{2} + \vep + 4 + b(n) \eta \right) F, \]
where $F = \int_{B_R(0)} |Z|^2 \dvolh$.

We must have $-\frac{n^2}{2} + \vep + 4 + b(n) \eta < 0$ in order to get an exponential decay estimate.  Thus we can take $\vep$ so small that (say) $\vep < 1/8$.  Since $b(n) \eta < 1/8$ and $n \geq 3$, we see $-\frac{n^2}{2} + \vep + 4 + b(n) \eta \leq -\frac{1}{2} + \frac{1}{4} = -\frac{1}{4}$.
\end{proof}

We now have all of the new ingredients needed to prove Theorem \ref{maintheorem2}.  Let $0 < \eta < \eta(n)$ and $h$ be an $\eta$-admissible Einstein metric.  We proceed exactly as in \cite[Theorem 3.4]{SSS}. 

\appendix
\section{More on Linear Parabolic PDE theory}

In this appendix we give more detail surrounding linear parabolic theory on conformally compact asymptotically hyperbolic manifolds.  Our approach to understanding these operators is based on the edge heat calculus developed in \cite{Albin}.  Note that in this appendix we deal exclusively with the $0$-case but the arguments generalize in a straightforward manner to the full complete edge case.

The point of view we adopt is that for a second order uniformly degenerate elliptic operator with time-independent coefficients, we can explicitly construct the heat kernel as a polyhomogeneous distribution on an appropriate manifold with corners that covers $M \times M \times \bR^+$.  In this section we will first describe this blow up space.  We then proceed to discuss the heat kernel as constructed in \cite{Albin}.  We then prove several mapping properties of these kernels.  We conclude by proving Schauder type estimates.

We now introduce the appropriate blow up spaces for the construction of the heat kernel.  First we define the 0-double space: $M^2_e$, originally introduced in \cite{Mazzeo} for the elliptic edge calculus.  This is a manifold with corners that covers $M^2$, and is obtained by introducing polar coordinates around the submanifold
\[ \Mdel \times_B \Mdel = \{ (w, w') \in \Mdel \times \Mdel: w = w' \}. \]
So $M^2_e = [M \times M; \Mdel \times_B \Mdel]$.  This will introduce three new boundary hypersurfaces; following Albin we denote these by $B_{11}$ (the front face), $B_{01}$ (the right boundary) and $B_{10}$ (the left boundary).  We denote the blowdown map
\[ \beta_e: M^2_e \rightarrow M^2, \]
and the edge diagonal by
\[ \diag_e = \overline{ \beta^{-1}_e( \diag \backslash \Mdel \times_B \Mdel) }. \]

We describe the edge double space in terms of coordinate charts.  In the interior of $M^2_e$ we may use the usual coordinates
\[ \left( (x,y), (x',y') \right) = \left( \zeta, \zeta' \right), \]
where $y$ will always denote coordinates along $B$ and $z$ will always denote coordinates along $F$.  We will favour the following projective coordinates for $M^2_e$, defined away from $B_{10}$ and that express the edge diagonal easily are given by 
\[ \left( (x,y,z), \left( s := \frac{x'}{x}, v := \frac{y' - y}{x}\right) \right). \]

Note that in these coordinates, $s=0$ is a defining function for $B_{01}$ and $x=0$ for the front face (away from $B_{10}$).  By reversing the roles of $x$ and $x'$ in the obvious manner, one may obtain a second chart covering the remainder of $M^2_e$.

We now introduce the heat space $HM^2_e$.  This is given by a parabolic blow up of the manifold $M^2_e \times \bR_{+}$ along the submanifold $\diag_e \times \{0\}$.  This gives us a number of new boundary hypersurfaces.  We keep Albin's notation for these, illustrated in Figure~\ref{blowup}.

\begin{figure}[h]
\includegraphics[scale=0.5]{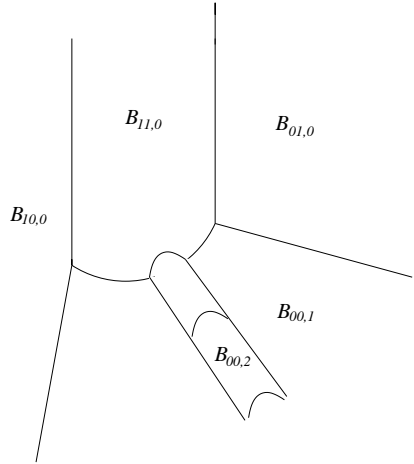}
\caption{The blown up heat space, $HM^2_e$}
\label{blowup}
\end{figure}

We now discuss the coordinate systems we can use on $HM^2_e$.  In what follows we work away from $B_{10,0}$ (i.e. away from $x=0$).  Near $B_{11,0}$, and away from $B_{00,1}$ we can use
\begin{equation} \label{coord-near-lc} \left( (x,y), \left( s' := \frac{x'}{x}, v' := \frac{y' - y}{x}\right), \tau := t^{1/2} \right). 
\end{equation}

Near $B_{11,0}$ and the `top' of $B_{00,2}$ we may use
\begin{equation} \label{coord-near-tff} \left( (S,U), \zeta', \tau \right) := \left( \left( \frac{x-x'}{x' t^{1/2}}, \frac{y-y'}{x' t^{1/2}} \right), (x',y'), t^{1/2} \right). 
\end{equation}
In these coordinates, $x'=0$ defines the $B_{11,0}$ and $\tau = 0$ defines $B_{00,2}$.  Finally, near $B_{11,0}$ and the `bottom' of $B_{00,2}$, close to $B_{00,1}$ we appear to need to introduce another coordinate system.  However, we observe that this region is reached using the above coordinates as $|(S,U)| \rightarrow +\infty$.  We will soon see that our heat kernels vanish to infinite order along this boundary.

We will denote the full blow down map $\beta: HM^2_e \rightarrow M^2 \times \bR^+$.  

Given a manifold with corners $M$, ${C}^{\infty}(\Mbar)$ denotes functions on $M$ that are smooth in the interior and smooth up to all boundary hypersurfaces.  The space $\dot{C}^{\infty}(M)$ will denote smooth functions vanishing to all orders at the boundary hypersurfaces.  If $\cF$ denotes a list of boundary hypersurfaces then $\dot{C}^{\infty}_{\cF}(M)$ denotes smooth functions vanishing to all orders at all boundary hypersurfaces except those in $\cF$; at the other hypersurfaces we demand the functions are smooth up to the boundary. 

We will also need to define sets of functions that have asymptotic expansions at the boundary hypersurfaces.  Let $M$ be a manifold with corners with boundary defining functions $x_i$.  A distribution $u$ is polyhomogeneous conormal\footnote{See \cite{Grieser} for a discussion and to make the meaning of $\sim$ precise.} if:
\[ u \sim \sum_{Re s_j \rightarrow \infty} \sum_{p=0}^{p_j} a_{j,p}(x,y) x^{s_j} (\log x)^p, \]
where $a_{j,p} \in C^{\infty}(\Mbar)$.  We'll denote the set of such distributions $\mathcal{A}^*_{phg}$.  We can also restrict the set of exponents that may occur above.  Define an index set to be a discrete subset $E \subset \bC \times \bN_0$ such that
\begin{enumerate}
\item if $(s_j, p_j) \in E$ and $|(s_j,p_j)| \longrightarrow \infty$, then $Re(s_j) \longrightarrow \infty$.\\
\item if $(s,p) \in E$ then $(s+k, p-l) \in E$ for any $k, l \in \bN, l \leq p$.
\end{enumerate}
Given a set of index sets $\mathcal{E}$ for each boundary hypersurface, we denote by $\sA^{\mathcal{E}}_{phg}$ the set of polyhomogeneous conormal functions with exponents ranging in $\mathcal{E}$.  Note that we will use a few special notations for index sets.  The empty set will denote the index set for a function vanishing to all orders along a hypersurface.  A single number $n \in \bN_0$ will denote the index set $\{ (j, 0): j\in \bN, j \geq n \}$ of functions vanishing to order $n$.  Note that the index set $\{ 0 \}$ represents functions smooth up to the hypersurface.  For more details about operations on these sets, see the concise review in \cite[Appendix A]{Mazzeo}.

\subsection{The heat kernel of a uniformly degenerate elliptic operator}

Let $L$ be a second order uniformly degenerate elliptic operator.  We consider a heat type equation
\[ \left\{ \begin{array}{rl} 
(\partial_t - L) u(\zeta,t) & = 0 \\
u(\zeta,0) &= f(\zeta),\end{array} \right. \]
where $f \in \Gamma(M;\sE)$ is a smooth section of a vector bundle $\sE$.

The heat kernel of $L$ is a distribution on $M^2 \times \bR^+$ so that the solution to the above problem is given by:
\[ u(\zeta, t) = \int_M h(\zeta, \zeta', t)f(\zeta') \dvol(\zeta'). \]
Here $h$ formally satisfies:
\begin{equation} \label{heatkerneqn} \left\{ \begin{array}{rl} 
(\partial_t - L_{\zeta}) h(\zeta,\zeta',t) & = 0 \\
h(\zeta,\zeta',0) &= \delta(\zeta-\zeta'),\end{array} \right. 
\end{equation}
We will see that $h = \beta_* H$, where $H$ is a polyhomogeneous distribution on $HM^2_e$. 

The actual construction of this distribution is done for half-densities, so that it makes sense to compose operators.  We briefly review Albin's construction of the heat calculus.  We define a weighted bundle of half-densities $D := \rho_{00,2}^{-\frac{n}{2} + 2} \rho_{11,0}^{-\frac{n+1}{2}} \Omega^{1/2}(HM_e^2)$.  Kernels of operators in the heat calculus are elements of
\[ K^{k, l}(M, D) := \rho_{00,2}^k \rho_{11,0}^l \dot{C}^{\infty}_{B_{00,2}, B_{11,0}}(HM_e^2; D). \]
The action of a kernel $K_A$ in $K^{k,l}$ on smooth half-densities is given by
\[ A(f)(\zeta,t) = \int_M \beta_* K_A( \zeta, \zeta', t) f(\zeta'). \]
We'll denote the operator $A$ acting in this manner by $A \in \Psi_{e, Heat}^{k,l}$.

Albin proves:
\begin{theorem} If $L$ is the scalar Laplacian of a exact edge metric, then $A \in \Psi^{2,0}_{e, Heat}$, where $A$ is the heat operator of $\partial_t - L$.
\end{theorem}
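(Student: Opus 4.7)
The plan is to construct the heat kernel of $\partial_t - L$ explicitly as a polyhomogeneous half-density on $HM^2_e$ by a parametrix construction, and then verify that it lies in the class $K^{2,0}(M, D)$.

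\textbf{Step 1: Initial parametrix near $B_{00,2}$.}  Near the top face $B_{00,2}$, the rescaled projective coordinates $(S,U)$ from \eqref{coord-near-tff} flatten the edge metric at the point $\zeta'$: in these coordinates the top order part of $L$, computed at the basepoint, becomes a constant coefficient Laplacian on $\mathbb{R}^{n+1}$. I would take as the leading model the Euclidean heat kernel $(4\pi)^{-(n+1)/2} e^{-(S^2+|U|^2)/4}$ at $\tau = 0$. Using the parabolic homogeneity of $B_{00,2}$, this leading kernel lies in $\rho_{00,2}^{-n-1} \rho_{11,0}^{0} \dot{C}^{\infty}_{B_{00,2},B_{11,0}}$ as a half-density, which, after accounting for the half-density weights built into $D$, corresponds to the $(2,0)$ order in the heat calculus. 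Gaussian decay in $(S,U) \to \infty$ handles the passage toward $B_{00,1}$ and gives vanishing to infinite order there, as required.

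\textbf{Step 2: Solve at the front face $B_{11,0}$.}  The initial parametrix $H_0$ will fail to solve $(\partial_t - L)H_0 = 0$ at $B_{11,0}$. The normal operator of $L$ at $B_{11,0}$ is, by the edge structure, the scalar Laplacian of the hyperbolic model metric in the fiber coordinates $(s', v')$ of \eqref{coord-near-lc}. I would modify $H_0$ by adding a correction whose restriction to $B_{11,0}$ is the heat kernel of this hyperbolic Laplacian (regarded as a fiberwise family parametrized by the basepoint in $B$). This hyperbolic heat kernel is itself polyhomogeneous on the appropriate blown-up fiber heat space, and contributes terms with index set starting at $0$ on $B_{11,0}$, confirming the $l=0$ weight.

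\textbf{Step 3: Remove the remaining error by Volterra iteration.}  After Steps 1–2 the error $R := (\partial_t - L) H_1$ vanishes to positive order at both $B_{00,2}$ and $B_{11,0}$ and to infinite order elsewhere; in particular $R \in K^{k,l}$ for some $k > 2, l > 0$. Composition properties of the edge heat calculus (which follow from the standard pushforward/pullback analysis on $HM^2_e$, cf. \cite{Albin}) imply that Volterra convolution $R \mapsto R + H_1 *_t R + \ldots$ converges in the heat calculus and produces the true kernel $H = H_1 - H_1 *_t R + \cdots \in K^{2,0}(M,D)$. Uniqueness of the fundamental solution, together with $\beta_* H$ satisfying \eqref{heatkerneqn}, then identifies this kernel with the true heat kernel of $L$.

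\textbf{Main obstacle.}  The principal technical point is Step 2: one must verify that the normal operator at $B_{11,0}$ really is a hyperbolic Laplacian (up to a smoothly varying change of frame along the edge $B$) and that its heat kernel has a polyhomogeneous description compatible with the global blow-up $HM^2_e$, so that the correction glues smoothly with the Euclidean model of Step 1 across the corner $B_{00,2} \cap B_{11,0}$. Once the model kernels are matched, the composition and iteration of Step 3 is formal and the index set $(2,0)$ is then read off directly from the orders of vanishing of the constructed kernel at the two non-trivial boundary hypersurfaces.
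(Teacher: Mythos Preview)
Your proposal is correct and follows essentially the same three-step approach that the paper sketches (following Albin and Melrose): construct an initial parametrix at $B_{00,2}$ via the Euclidean heat kernel on the fibres, correct at $B_{11,0}$ using the heat kernel of the hyperbolic model, and then pass from parametrix to true kernel via a Volterra iteration. The paper itself does not give an independent proof of this theorem but cites Albin, and its brief indication of the argument matches your outline.
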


We note that Albin's construction is closely modeled on the work of Melrose \cite{Melrose}, and generalizes in a straightforward manner to general second order uniformly degenerate elliptic operators.  Furthermore, Melrose also considers the case of elliptic operators between bundles \cite[Theorem 7.29]{Melrose} with diagonal principal symbol.  Thus we have

\begin{theorem} \label{thm-htkrnl-strc} If $L$ is a uniformly degenerate elliptic operator with diagonal principal symbol, then $A \in \Psi^{2,0}_{e, Heat}$, where $A$ is the heat operator of $\partial_t - L$.
\end{theorem}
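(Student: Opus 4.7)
The plan is to adapt Albin's parametrix construction of the heat kernel for an exact edge Laplacian, which is itself modeled on Melrose's heat calculus, to the slightly more general setting stated here. The key observation is that the construction is driven entirely by two pieces of data: the principal symbol ${}^0\sigma(L)$, which controls the model problem at the temporal-diagonal face $B_{00,2}$, and the normal operator $N(L)$ at the front face $B_{11,0}$. Under the diagonal principal symbol hypothesis, both of these reduce, at top order, to scalar operators tensored with the identity on fibers of $\mathcal{E}$, so the scalar edge-heat construction lifts component-by-component, with any off-diagonal lower-order coupling producing only subleading errors.

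First I would build an initial parametrix $A_0$ near $B_{00,2}$. In the projective coordinates \eqref{coord-near-tff}, the operator $\tau^{2}(\partial_t - L)$ restricts at $\tau = 0$ to a constant-coefficient operator in $(S,U)$ whose symbol is ${}^{0}\sigma(L)$ frozen at $\zeta'$; ellipticity makes this model solvable by the standard Euclidean heat kernel, and the resulting leading term is Schwartz in $(S,U)$. Lifted to $HM^{2}_{e}$ with the bundle $D$, this yields a kernel vanishing to all orders at $B_{10,0}, B_{01,0}, B_{00,1}$ and of the correct order $\rho_{00,2}^2 \rho_{11,0}^0$, so $A_0 \in \Psi^{2,0}_{e,Heat}$ and $(\partial_t - L)A_0 - \delta$ vanishes to positive order at $B_{00,2}$. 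Next I would improve the parametrix at $B_{11,0}$: in the coordinates \eqref{coord-near-lc}, $N(L)$ is a translation-invariant second order uniformly degenerate elliptic operator on a model half-space, and its heat kernel can be constructed by Laplace transform in $s'$ together with Fourier transform in $v'$, producing a distribution smooth up to the interior of $B_{11,0}$ with the expected polyhomogeneous asymptotics governed by the indicial roots of $L$. Pulling back and gluing via a cutoff, I upgrade $A_0$ so that the remaining error $E = (\partial_t - L) A_0 - \delta$ vanishes to infinite order at $B_{11,0}$ as well, while still having strictly improved order at $B_{00,2}$ relative to $A_0$.

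Finally I would remove $E$ by a Volterra iteration. The composition theorem for the heat calculus on $HM^2_e$, namely that time convolution sends $\Psi^{k_1,l_1}_{e,Heat} \star \Psi^{k_2,l_2}_{e,Heat}$ into $\Psi^{k_1+k_2,l_1+l_2}_{e,Heat}$ with strictly improved order at $B_{00,2}$ in each successive convolution, implies that the Neumann series $\sum_{j \geq 0} (-1)^j A_0 \star E^{\star j}$ converges asymptotically in the calculus to give the exact kernel $A$, still in $\Psi^{2,0}_{e,Heat}$. The mapping claims in Proposition \ref{prop:polyhomog} and the Schauder estimate in Theorem \ref{thm:schauder} then follow from the asymptotic structure of $A$ at each boundary hypersurface of $HM^2_e$ via standard pushforward arguments.

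The main obstacle is the front-face step: solving the model heat problem for $N(L)$ and verifying its polyhomogeneous expansion at $B_{11,0}$. In the systems setting this is where the diagonal principal symbol hypothesis does real work, since it guarantees that the indicial family of $L$ is block-diagonal at top order and hence that the indicial roots governing the expansion coincide with the scalar case on each diagonal block, with off-diagonal couplings contributing only at strictly subleading indicial weights that are absorbed by the iteration. Once this is in hand the rest of the argument is the standard edge-heat calculus machinery.
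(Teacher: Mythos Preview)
Your three-step outline---initial parametrix at $B_{00,2}$ from the Euclidean model, correction at $B_{11,0}$ via the normal operator, then Volterra iteration---is exactly the Albin--Melrose construction the paper invokes, and your observation that the diagonal principal symbol hypothesis reduces both model problems to scalar ones acting diagonally on fibers is the right reason the argument extends to bundles.

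One point to sharpen: in the front-face step you speak of polyhomogeneous asymptotics ``governed by the indicial roots of $L$,'' but this is not how the heat calculus works. Elements of $\Psi^{2,0}_{e,Heat}$ vanish to \emph{infinite} order at $B_{10,0}$ and $B_{01,0}$ and are merely smooth up to $B_{11,0}$; the indicial roots never enter. In the $0$-case treated here the front-face fibers carry a hyperbolic metric, and the paper simply uses the explicit hyperbolic heat kernel (which decays rapidly off the diagonal, giving the infinite-order vanishing at the side faces for free) rather than building $N(L)$'s heat kernel by transforms. Your transform approach would also work, but the relevant fact is rapid spatial decay of the model heat kernel, not any indicial-root expansion.
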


We now give a brief indication of the proof of theorem and refer the reader to \cite{Albin, Melrose} for further detail.  We work in $HM^2_e$ with the ansatz that the solution already vanishes to infinite order at $B_{10,0}, B_{01,0},$ and $B_{00,1}$.  To deal with the rest of the equation and boundary hypersurfaces involves three main steps.  First, an initial parametrix is constructed by pulling the heat equation back to $HM^2_e$ in coordinates near the blown up diagonal.  As $B_{00,2}$ fibres over the diagonal, we find that the equation restricts to a Euclidean type heat equation on each fibre with smooth coefficients in the variables along the fibre.  Thus we may progressively solve away the Taylor series at $B_{00,2}$ with control of the asymptotics down to $B_{11,0}$.  This handles the initial condition.  The second step is to progressively solve away the Taylor series at $B_{11,0}$ using the heat kernel of hyperbolic space (recall $0$-metrics are asymptotically hyperbolic).  The result of these two steps is a parametrix solving the heat equation to infinite order at all boundary hypersurfaces.  To improve the parametrix to an actual inverse requires an argument involving Volterra operators and is given in \cite[Proposition 7.17]{Melrose}.

Finally we note that the construction above also works when the background metric is polyhomogeneous.

\subsection{Mapping properties}

In this section we study the action of the heat kernels in $\Psi^{2,0}_{e, Heat}$ above on 
functions, using Melrose's pushforward theorem.  Figure \ref{fig1} introduces some important notation.

\begin{figure}[h]
  \includegraphics[scale=0.75]{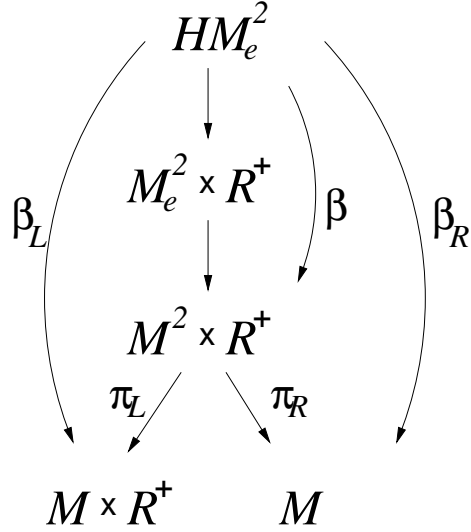}
\caption{Definition of various maps}
\label{fig1}
\end{figure}

We identify functions and half-densities on $M^2 \times \bR^+$ and the factors $M \times \bR^+$ and $M$ by\footnote{Here we omit the smooth factor $\sqrt{\det g}$ in the densities that follow.}
\[ f(x,y,x',y',t) \leftrightarrow f(x,y,x',y',t) x^{-\frac{(n+1)}{2}} (x')^{-\frac{(n+1)}{2}} |dx dy dx' dy' dt|^{1/2}, \]
\[ f(x,y,t) \leftrightarrow f(x,y,t) x^{-\frac{(n+1)}{2}} |dx dy dt|^{1/2}, \]
\[ f(x,y) \leftrightarrow f(x) x^{-\frac{(n+1)}{2}} |dx dy|^{1/2}. \]

From \cite[page 13]{Albin}  an element of $A \in \Psi^{2,0}_{e, Heat}$ has an integral kernel that may be written as $\rho_{00,2}^{-\frac{n}{2}} \rho_{11,0}^{-\frac{n+1}{2}} k \cdot \nu$, where $k$ is a function that vanishes to infinite order at $B_{10,0}, B_{01,0}, \mbox{and} \; B_{00,1}$, and is smooth up to the boundary at $B_{00,2}$ and $B_{11,0}$, and $\nu$ is a smooth section of $\Omega^{1/2}(HM^2_e)$.

An operator $A \in \Psi^{2,0}_{e, Heat}$ acts on half-densities by
\begin{equation} \label{eqn:actionhalf} (A f)(x,y,t) x^{-\frac{n+1}{2}} |dx dy dt|^{1/2} = (\beta_L)_*\left( \rho_{00,2}^{-\frac{n}{2}} \rho_{11,0}^{-\frac{n+1}{2}} k \nu \cdot (\beta_R)^*( f(x',y') (x')^{-\frac{n+1}{2}} |dx' dy'|^{1/2} ) \right). 
\end{equation}

To relate these half-densities, let us work in coordinates near $B_{11,0}$ and $B_{00,2}$.  We may take $\nu = |dS dU dx' dy' d\tau|^{1/2}$.  Pulling our standard half-density on $M^2 \times \bR^+$ back we find
\begin{align*} \beta^* &( x^{-\frac{n+1}{2}} (x')^{-\frac{n+1}{2}} |dx dy dx' dy' dt|^{1/2} ) \\
&= (1+S\tau)^{-\frac{n+1}{2}} (x')^{-\frac{n+1}{2}} (x')^{-\frac{n+1}{2}} ( 2 (x')^{n+1} \tau^{n+2} )^{1/2} |dS dU dx' dy' d\tau|^{1/2} \\
&= \sqrt{2} (1+S\tau)^{-\frac{n+1}{2}} (x')^{-\frac{n+1}{2}} \tau^{\frac{n+2}{2}} \nu. 
\end{align*}
The factor $\sqrt{2} (1+S\tau)^{-(n+1)/2}$ is smooth and uniformly bounded, so we omit it hereafter.  

In order to apply Melrose's push-forward theorem, we must work with smooth b-densities.  Here is how to arrange this.  We multiply both sides by the half density $x^{-\frac{n+1}{2}} |dx dy dt|^{1/2}$, and noting
\begin{align*}
\beta_L^* & (x^{-\frac{n+1}{2}} |dx dy dt|^{1/2}) \beta_R^* ((x')^{-\frac{n+1}{2}} |dx' dy'|^{1/2} ) = \beta^*( x^{-\frac{n+1}{2}} (x')^{-\frac{n+1}{2}} |dx dy dx' dy' dt|^{1/2} ),
\end{align*}
we find the action on smooth densities is given by
\begin{align*}
(A f)&(x,y,t)x^{-(n+1)} |dxdydt| = (\beta_L)_*\left( \rho_{00,2}^1 \rho_{11,0}^{-n-1} k \cdot \beta_R^* f \; \nu^2 \right). 
\end{align*}

Finally we introduce a total defining function on both sides of this equation to obtain $b$-densities, which we denote with a subscript $b$.  In this case we have
$\nu^2_b = (\rho_{10,0} \rho_{01,0} \rho_{11,0} \rho_{00,1} \rho_{00,2})^{-1} \nu^2$, and $\sigma_b := \frac{1}{xt} |dx dy dt|$.  Now
\begin{align*} 
(A f)&(x,y,t) x t x^{-(n+1)} \; \sigma_b  \\
&= (\beta_L)_*\left( \rho_{10,0} \rho_{11,0}^{-n} \rho_{01,1} \rho_{00,1} \rho_{00,2}^{2} k \cdot(\beta_R)^*( f(x',y') )\; \nu_b^2 \right)
\end{align*}

We now apply this to the following
\begin{prop} Let $A \in \Psi^{2,0}_{e, Heat}$.  If $f \in \sA^{\sF}_{phg}(M)$ then $Af \in \sA^{(\sF, 0)}_{phg}(M \times \bR^+)$.
\end{prop}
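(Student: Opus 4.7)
The strategy is to apply Melrose's pushforward theorem for $b$-fibrations to the integral representation of $Af$ derived at the end of the preceding subsection, namely
$$ (Af)(x,y,t)\, xt\, x^{-(n+1)}\, \sigma_b \;=\; (\beta_L)_{*}\Bigl( \rho_{10,0}\,\rho_{11,0}^{-n}\,\rho_{01,0}\,\rho_{00,1}\,\rho_{00,2}^{2}\, k \cdot (\beta_R)^{*}f \;\nu_b^{2} \Bigr). $$
Since the pushforward theorem converts the polyhomogeneous index data of a $b$-density on $HM^2_e$ into polyhomogeneous index data on $M\times\bR^+$, the proof reduces to three tasks: tabulating index sets of the integrand at each of the five boundary hypersurfaces of $HM^2_e$, verifying the $b$-fibration hypotheses, and unwinding the density conversions.

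First I would identify the index set of $(\beta_R)^{*}f$. Under the right blow-down $\beta_R\colon HM^2_e \to M$, the boundary $\partial M$ lifts precisely to $B_{01,0}\cup B_{11,0}$, while the remaining faces project into the interior of $M$. Thus $(\beta_R)^{*}f$ has index set $\sF$ along both $B_{01,0}$ and $B_{11,0}$ and the trivial index set $\{0\}$ along $B_{10,0}$, $B_{00,1}$ and $B_{00,2}$. Combining this with the structure of the heat kernel from Theorem \ref{thm-htkrnl-strc}, in which the smooth factor $k$ vanishes to infinite order along $B_{10,0}$, $B_{01,0}$, $B_{00,1}$ and is smooth at $B_{11,0}$ and $B_{00,2}$, the full integrand $b$-density acquires index sets $\emptyset$ at the three ``infinite vanishing'' faces, $\sF - n$ at $B_{11,0}$ (the weight $\rho_{11,0}^{-n}$ tensored with the $\sF$ contribution from $(\beta_R)^{*}f$), and $\{2\}$ at $B_{00,2}$.

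Next I would verify that $\beta_L$ is a $b$-fibration and apply the pushforward theorem. The $b$-fibration hypothesis demands that each boundary hypersurface of $HM^2_e$ map to a single boundary hypersurface of the target (or into the interior), which is clear from the construction: $B_{10,0}$ and $B_{11,0}$ cover $\partial M\times\bR^+$, $B_{00,1}$ and $B_{00,2}$ cover $M\times\{0\}$, and $B_{01,0}$ lands in the interior, where the infinite vanishing of $k$ ensures the fibre integral converges. By the pushforward theorem the resulting $b$-density on $M\times\bR^+$ has index set $\emptyset\,\overline{\cup}\,(\sF-n)=\sF-n$ at $\partial M\times\bR^+$ and $\emptyset\,\overline{\cup}\,\{2\}=\{2\}$ at $M\times\{0\}$.

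Finally I would unwind the density conversions. At $\partial M\times\bR^+$ the factor $xt\,x^{-(n+1)}\sigma_b$ encodes an $x^{-n}$ relative to the ordinary density $|dx\,dy\,dt|$, so absorbing it lifts the index $\sF-n$ to $\sF$ for $Af$ itself. At $M\times\{0\}$ the identification $\rho_{00,2}=t^{1/2}$ is \emph{parabolic}, so the index $\{2\}$ in $\rho_{00,2}$ translates to index $\{1\}$ in $t$; dividing by the $t$ in the prefactor returns $\{0\}$, i.e.\ smoothness up to $t=0$ with the correct initial value $Af|_{t=0}=f$ inherited from the heat kernel construction. This gives exactly $Af\in\sA^{(\sF,0)}_{phg}(M\times\bR^+)$. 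The most delicate point is precisely this last reconciliation: one must correctly balance the parabolic scaling at $B_{00,2}$, the calculus-built weights $\rho_{00,2}^{-n/2}\rho_{11,0}^{-(n+1)/2}$, and the half-density/$b$-density conversions so that the apparent shifts cancel exactly to return the clean index set $(\sF,0)$.
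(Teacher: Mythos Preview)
Your proof is correct and follows essentially the same approach as the paper: compute the index set of $(\beta_R)^{*}f$ via the pullback theorem, combine it with the kernel's index data, apply Melrose's pushforward theorem for the $b$-fibration $\beta_L$, and then unwind the $b$-density and weight conversions (including the parabolic scaling $\rho_{00,2}=t^{1/2}$) to recover the index family $(\sF,0)$. Your face identifications---$\partial M$ lifting to $B_{01,0}\cup B_{11,0}$ under $\beta_R$, and $B_{01,0}$ mapping to the interior under $\beta_L$---are in fact the correct ones; the paper's exponent tables appear to contain typos swapping $B_{10,0}$ and $B_{01,0}$ (and $\sH_1,\sH_2$), but this is immaterial to the computation since $k$ vanishes to infinite order at both of those side faces.
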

\begin{proof}
First, let us consider the b-map $\beta_R$.  As no boundary hypersurface is mapped to a corner of $M$, $\beta_R$ is a b-fibration.  It is easy to check that the exponent matrix for this map is 
\begin{center}
\begin{tabular}{ l l l l l l }
  & $B_{10,0}$ & $B_{11,0}$ & $B_{01,0}$ & $B_{00,1}$ & $B_{00,2}$ \\
  $\partial X$ & 1 & 1 & 0 & 0 & 0 
\end{tabular}
\end{center}
As a consequence, if $f \in \sA^{\sF}_{phg}(M)$ with index set $\sF$, $\beta_R^* f \in \sA^{\{ \sF, \sF, 0, 0, 0 \}}_{phg}(HM^2_e)$, by the pull-back theorem \cite[Proposition A.13]{Mazzeo}.

The function $k$ is polyhomogeneous with respect to the index set $\{ \emptyset, 0, \emptyset, \emptyset, 0 \}$.  Accounting for the powers of the defining functions we obtain the index set $\{ \emptyset, -n, \emptyset, \emptyset, 2 \}$.  The index set for the product of this expression with the pull-back of $f$ is then $\mathcal{G} = \{ \emptyset, -n + F, \emptyset, \emptyset, 2. \}$.

The map $\beta_L$ is a b-fibration.  The exponent matrix for this map is 
\begin{center}
\begin{tabular}{ l l l l l l }
  & $B_{10,0}$ & $B_{11,0}$ & $B_{01,0}$ & $B_{00,1}$ & $B_{00,2}$ \\
  $\sH_1$ & 0 & 1 & 1 & 0 & 0 \\
  $\sH_2$ & 0 & 0 & 0 & 1 & 1
\end{tabular}
\end{center}

In the above table we have the labeled hypersurfaces of $M \times \bR^+$ in the following manner: $\sH_1$ represents $t=0$ and $\sH_2$ represents $x=0$.  We now apply Melrose's pushfoward theorem \cite[Proposition A.18]{Mazzeo}.  Note that the integrability condition is met at $B_{10,0}$ as $Re(\sG(B_{10,0})) > 0$.  Now the index set for $\sH_1 = \sG( B_{11,0} ) \overline{\cup} \sG(B_{01,0}) = -n+\sF$ and $\sH_2 = \sG( B_{00,1} ) \overline{\cup} \sG(B_{00,2}) = 2$.  Note that $\sH_2 = 2$ is not surprising since $\tau^2 = t$, from the parabolic blow up.

The calculation here provides the index sets needed to for computing the asymptotics of \[ (A f)(x,y,t) x t x^{-(n+1)} \; \sigma_b. \]
Canceling the powers of the defining functions, and returning to the identification of densities with functions now shows that $Af \in \sA^{(\sF, 0)}_{phg}(M \times \bR^+)$. 
\end{proof}

We use the above proposition primarily in the form
\begin{cor} \label{cor:polyhomog} If $f \in x^{\mu} C^{\infty}(\Mbar)$ then $Af \in x^{\mu} C^{\infty}(\Mbar_T)$.
\end{cor}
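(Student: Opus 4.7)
The plan is to recognize the corollary as a direct specialization of the preceding proposition together with an unpacking of the index-set notation. A function of class $x^{\mu} C^{\infty}(\Mbar)$ is, by definition, a polyhomogeneous conormal distribution on $M$ whose expansion at $\partial M$ has the form $\sum_{k \geq 0} a_k(y) x^{\mu+k}$ with $a_k \in C^{\infty}(\partial M)$ and no log terms. In the notation of the appendix, this is precisely $f \in \sA^{\sF}_{phg}(M)$ with index set $\sF = \{ (\mu+k, 0) : k \in \bN_0 \}$, which we abbreviate as $\mu + \bN_0$.

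With this identification, I would simply invoke the proposition just proved: since $A \in \Psi^{2,0}_{e, Heat}$, one has $Af \in \sA^{(\sF, 0)}_{phg}(M \times \bR^+)$. Here the index set at the boundary hypersurface $\sH_2 = \{x=0\}$ is $\sF = \mu + \bN_0$, while the index set at $\sH_1 = \{t=0\}$ is $\{0\}$, i.e. the index set of functions smooth up to $t=0$ with no additional decay. Restricting to $[0,T)$ in the time variable preserves polyhomogeneity.

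It remains to translate the polyhomogeneity conclusion back into the $x^{\mu} C^{\infty}(\Mbar_T)$ language. The index set $\{0\}$ at $\sH_1$ means $Af$ is smooth up to $t=0$, and the index set $\mu + \bN_0$ at $\sH_2$, with no log terms and integer spacing of exponents, means $Af$ admits an expansion $\sum_{k \geq 0} b_k(y,t) x^{\mu+k}$ with coefficients $b_k$ smooth on $\partial M \times [0,T)$. Equivalently, $x^{-\mu} Af$ extends smoothly to $\Mbar \times [0,T)$, which is the statement $Af \in x^{\mu} C^{\infty}(\Mbar_T)$.

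No step here is an obstacle of its own: the real work has already been carried out in the proof of the proposition, which required the pushforward theorem, the b-fibration structure of $\beta_L$ and $\beta_R$, and the computation of the exponent matrices. The only thing to be careful about is bookkeeping the index sets at $\sH_1$ and $\sH_2$: the former gets the extended union $\sG(B_{00,1}) \, \overline{\cup}\, \sG(B_{00,2}) = 2$ in the density formulation and then cancels against the factor of $xt$ used to convert from $b$-densities back to functions, leaving smoothness in $t$; the latter keeps the shifted index set $\mu + \bN_0$ contributed by $f$. Once this is verified, the corollary follows immediately.
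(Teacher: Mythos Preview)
Your reduction to the proposition is correct in spirit, but you have glossed over the one genuine issue that the paper's proof singles out. Because the heat space $HM^2_e$ is built by a \emph{parabolic} blowup of $\diag_e \times \{0\}$, the natural boundary defining function at the temporal face is $\tau = t^{1/2}$, not $t$. Consequently, when the pushforward theorem is applied in the proposition, the conclusion $Af \in \sA^{(\sF,0)}_{phg}(M \times \bR^+)$ only yields a polyhomogeneous expansion in powers of $\sqrt{t}$ at $t=0$. Your claim that ``the index set $\{0\}$ at $\sH_1$ means $Af$ is smooth up to $t=0$'' and that the density factor ``leaves smoothness in $t$'' is therefore not justified by the index-set bookkeeping alone: a priori you only know $Af$ is smooth as a function of $(x,y,\sqrt{t})$, which is strictly weaker than $Af \in x^{\mu} C^{\infty}(\Mbar_T)$.

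The paper closes this gap with a short additional argument that you should include: since $Af$ solves $(\partial_t - L)(Af) = 0$ and is already smooth in the spatial variables (this much does follow from the index set $\sF$ at $x=0$), one has $\partial_t(Af) = L(Af)$ smooth up to the boundary, and iterating the equation gives smoothness of all $t$-derivatives. This upgrades the $\sqrt{t}$-expansion to genuine smoothness in $t$. Once you add this step, your argument matches the paper's.
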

\begin{proof}
The only point that we have to be careful about is that the previous theorem only guarantees an expansion in powers of $\tau = \sqrt{t}$.  However we can obtain full smoothness in $t$ by using the fact that $Af$ solves the heat equation and is already smooth in the spatial derivatives.
\end{proof}
\begin{cor} \label{cor:polyhomog2} If $f \in x^{\mu} C^{\infty}(\Mbar_T)$ and $H$ denotes the time convolution of the heat operator of $A$, then $Hf \in x^{\mu} C^{\infty}(\Mbar_T)$.
\end{cor}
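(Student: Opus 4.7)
The strategy is to combine a polyhomogeneous/pushforward argument for spatial smoothness with Duhamel's PDE for temporal smoothness. By construction $u := Hf$ solves
\[ (\partial_t - L) u = f, \qquad u(\cdot,0) = 0, \]
and the goal is to show $u$ has a polyhomogeneous expansion in $x$ with smooth coefficients depending smoothly on $t$.

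First I would establish spatial smoothness: $(Hf)(\cdot,t) \in x^{\mu} C^\infty(\Mbar)$ for each fixed $t$. After the substitution $\tau = t-s$ one has
\[ (Hf)(\zeta,t) = \int_0^t \bigl(A_\tau f(\cdot,\, t-\tau)\bigr)(\zeta)\,d\tau. \]
The pushforward argument proving Proposition \ref{prop:polyhomog} applies to the function $F(\zeta,\tau,s) := (A_\tau f(\cdot,s))(\zeta)$ with $s$ treated as a smooth external parameter: the pullback under $\beta_R$ of $f(\cdot,s) \in x^{\mu} C^\infty(\Mbar)$ is polyhomogeneous on $HM^2_e$ with the same index set at each boundary hypersurface (and with smooth dependence on $s$), and $\beta_L$ is still the b-fibration used there, so Melrose's pushforward theorem gives $F \in x^{\mu} C^\infty(\Mbar \times [0,T] \times [0,T])$. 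Setting $s = t-\tau$ and integrating over $\tau \in [0,t]$ is a smooth compact integration that preserves polyhomogeneity in $x$, so $Hf(\cdot,t) \in x^{\mu} C^\infty(\Mbar)$ with smooth dependence on $t$.

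Temporal smoothness to all orders then follows from the PDE. The operator $L$ is built from the $0$-vector fields $x\partial_x$, $x\partial_{y^i}$ with coefficients smooth up to $\partial M$ and time-independent; since each $0$-vector field maps $x^{\mu} C^\infty(\Mbar)$ to itself, so does $L$. Hence $\partial_t u = Lu + f \in x^{\mu} C^\infty(\Mbar)$ at each time. Differentiating the equation $k$ times in $t$ and using that $f \in x^{\mu} C^\infty(\Mbar_T)$, induction yields $\partial_t^k u \in x^{\mu} C^\infty(\Mbar)$ for every $k$, which combined with the spatial regularity gives $Hf \in x^{\mu} C^\infty(\Mbar_T)$.

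I expect the main obstacle to be the spatial step, specifically confirming that Melrose's pushforward theorem applies without modification when the integrand carries an additional time parameter $s$ and that the subsequent integration in $\tau$ over $[0,t]$ does not disturb the asymptotic expansion at $\partial M$. A less invariant alternative would be to Taylor-expand $f(\zeta,s) = \sum_{k=0}^{N} \frac{s^k}{k!}(\partial_t^k f)(\zeta,0) + s^{N+1} g_N(\zeta,s)$ with $g_N \in x^{\mu} C^\infty(\Mbar_T)$, apply Corollary \ref{cor:polyhomog} to each time-independent Taylor coefficient, and estimate the remainder using the Schauder bound \eqref{est:schauder}; this produces finite smoothness in $t$ of any prescribed order, which, together with the PDE bootstrap, suffices.
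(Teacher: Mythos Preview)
The paper gives no proof for this corollary; it is stated immediately after Corollary~\ref{cor:polyhomog} and left as an evident consequence. Your proposal supplies a correct and careful argument in exactly the spirit of the paper's proof of the preceding corollary: obtain spatial polyhomogeneity via the pushforward theorem (now with an extra smooth time parameter), then recover full $t$-smoothness from the equation $(\partial_t - L)u = f$. The only point worth flagging is the same subtlety the paper highlights in the proof of Corollary~\ref{cor:polyhomog}: the pushforward a priori yields an expansion in $\tau = \sqrt{t}$, not $t$, so your claim that $F(\zeta,\tau,s) \in x^{\mu}C^{\infty}(\Mbar \times [0,T] \times [0,T])$ already relies on the heat equation $(\partial_{\tau} - L)F(\cdot,\tau,s) = 0$ for each fixed $s$ to upgrade to smoothness in $\tau$; once that is noted, your integration-in-$\tau$ and PDE bootstrap go through without difficulty. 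Your Taylor-expansion alternative is also valid and arguably closer to a literal ``corollary of Corollary~\ref{cor:polyhomog}'' argument, since it reduces directly to the time-independent case.
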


We conclude this section with a proposition that we will need in the finer regularity analysis of the Ricci flow.  This shows that the commutator of a $b$-vector field with an element of the heat calculus remains in the calculus, and thus has the same mapping properties.
\begin{prop} \label{prop:commutator} If $A \in \Psi^{2,0}_{e;Heat}(X)$ and $V_b$ is any $b$-vector field, then
$[A, V_b] \in \Psi^{2,0}_{e;Heat}(X)$.
\end{prop}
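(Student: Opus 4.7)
The plan is to compute the kernel of $[A, V_b]$ directly on the heat space $HM^2_e$ and show it still lies in $\Psi^{2,0}_{e;Heat}$. Since $V_b$ is purely spatial, $[A, V_b] = AV_b - V_b A$ and its kernel is $V_{b,\zeta'}^T K_A - V_{b,\zeta} K_A$, where $V_{b,\zeta}$ and $V_{b,\zeta'}$ denote $V_b$ acting on the left and right factors of $M \times M$, and the superscript $T$ is the formal transpose relative to the density used in composition. Because $V_b$ is a $b$-vector field and the density is a smooth positive multiple of the $0$-volume form $x^{-(n+1)}|dx\,dy|$, a short computation gives $V_{b,\zeta'}^T = -V_{b,\zeta'} + c(\zeta')$ for some $c \in C^{\infty}(\Mbar)$. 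Hence
\[ K_{[A,V_b]} = -(V_{b,\zeta} + V_{b,\zeta'})\,K_A + c(\zeta')\,K_A. \]
The second term is manifestly in $\Psi^{2,0}_{e;Heat}$, since multiplication by $\beta_R^* c$ is multiplication by a smooth function on $HM^2_e$.

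The main content is to show that the vector field $V := V_{b,\zeta} + V_{b,\zeta'}$ on $M^2 \times \bR^+$ lifts to a smooth $b$-vector field on $HM^2_e$. Granting this, applying a smooth $b$-vector field to $K_A$ preserves the calculus: tangency to a boundary face $\{\rho = 0\}$ forces $V(\rho^\alpha) = \alpha \rho^\alpha \cdot (\textrm{smooth})$, so the weight factors $\rho_{00,2}^{-n/2+2}$ and $\rho_{11,0}^{-(n+1)/2}$ are preserved, and infinite-order vanishing at $B_{10,0}, B_{01,0}, B_{00,1}$ as well as smoothness up to $B_{00,2}$ and $B_{11,0}$ carry over unchanged.

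To verify the lifting I would do a local computation in the two charts \eqref{coord-near-lc} and \eqref{coord-near-tff}, writing a general $b$-vector field as $V_b = a(x,y)\,x\partial_x + b^i(x,y)\,\partial_{y^i}$. Neither $V_{b,\zeta}$ nor $V_{b,\zeta'}$ lifts smoothly on its own: in the parabolic chart $(S,U,x',y',\tau)$ each produces a $1/\tau$ singularity, e.g.\ $V_{b,\zeta}$ contributes a term $a(x,y)(S + 1/\tau)\partial_S$ while $V_{b,\zeta'}$ contributes $-a(x',y')(S + 1/\tau)\partial_S$. In the sum these combine as $(a(x,y) - a(x',y'))(S + 1/\tau)\partial_S$; using $x = x'(1 + \tau S)$ and $y = y' + x'\tau U$, the coefficient difference is $O(\tau)$ smoothly, so the product is smooth up to $\tau = 0$. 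The analogous cancellations for the other singular contributions (and for the projective chart near $B_{11,0}$) combine to give a smooth vector field whose tangency to every boundary hypersurface of $HM^2_e$ is then immediate by inspection.

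The hard part is exactly the coordinate computation near the parabolic front face $B_{00,2}$, where the two centers of the blowup interact and one must carefully track the cancellation of the $1/\tau$ singularities between the left and right actions of $V_b$ to confirm that the lifted vector field is genuinely smooth and tangent to $B_{00,2}$. Once that cancellation is in hand, the transition to $\Psi^{2,0}_{e;Heat}$ is essentially automatic from the structure of the class and the fact that $b$-vector fields act on polyhomogeneous distributions while preserving all index sets.
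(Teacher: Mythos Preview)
Your proposal is correct and follows essentially the same approach as the paper: both reduce the commutator to the action of the sum $V_{b,\zeta} + V_{b,\zeta'}$ (plus a harmless zeroth-order term) on the kernel, and both identify the key point as the cancellation of the $1/\tau$ singularities at $B_{00,2}$ between the left and right lifts. The paper treats only the model case $V_b = \partial_y$ with constant coefficients, where the cancellation is exact; your version handles a general $b$-vector field and correctly notes that the coefficient difference $a(x,y)-a(x',y')$ provides the extra factor of $\tau$ needed, which is a slightly more detailed account of the same mechanism.
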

\begin{proof}
The proof is similar to \cite[Proposition 3.30]{Mazzeo}, adapted to the heat calculus setting.  We sketch the proof here.  Return to the action on half-densities, equation \eqref{eqn:actionhalf}, and suppose that $f$ is a smooth half-density vanishing to all orders at the boundary hypersurfaces.  Now suppose for simplicity that $V_b = \partial_y$ is a $b$-vector field.  After an integration by parts, we may write 
\begin{align*}  & (\partial_y A f)(x,y,t) - (A \partial_y f)(x,y,t) \\ &= (\beta_L)_*\left( \left( \beta_L^*(\partial_y) + \beta_R^* (\partial_{y'}^T )\right) \rho_{00,2}^{-\frac{n}{2}} \rho_{11,0}^{-\frac{n+1}{2}} k \nu \cdot (\beta_R)^*( f(x',y') (x')^{-\frac{n+1}{2}} |dx' dy'|^{1/2} ) \right),
\end{align*}
where $\partial_{y'}^T$ is the adjoint of $\partial_{y'}$ under the measure.  The key now is that while each of $\partial_y$ and $\partial_{y'}$ lifts to a vector field singular near $B_{00,2}$, their sum cancels this behaviour.  Indeed, computing in the coordinates defined in equation \eqref{coord-near-tff}, we find that 
\[ \beta_L^* (\partial_y ) = \frac{1}{x' \tau} \partial_U \]
\[ \beta_R^* (\partial_{y'}^T) = -\frac{1}{x' \tau} \partial_U + \partial_{y'} + \; \mbox{smooth function}. \]
Consequently, $\beta_L^*(\partial_y) + \beta_R^* (\partial_{y'}^T)$ does not affect the asymptotics of the kernel, and $[A, V_b] \in \Psi^{2,0}_{e;Heat}(X)$.
\end{proof}

We conclude with a discussion of the main existence theorem for the inhomogeneous Cauchy problem:
\begin{equation} \label{inhomo-cauchy}
 \left\{ \begin{array}{rl} 
(\partial_t - L) u(\zeta,t) & = f(\zeta,t) \\
u(\zeta,0) &= 0,\end{array} \right. 
\end{equation}
where $f \in x^{\mu} C^{a, \frac{a}{2}}_e(M_T)$ and $L$ is a second order uniformly degenerate elliptic operator with coefficients in $C_e^{a}$.  

\begin{theorem} \label{thm:inhomo}
Suppose $L$ is a second order uniformly degenerate elliptic operator with time-independent coefficients.  For every $f \in x^{\mu} C^{a, \frac{a}{2}}_e(M_T)$ there is a solution $u$ to \eqref{inhomo-cauchy} in $x^{\mu} C^{2+a, \frac{2+a}{2}}_e(M)$.  Moreover, $u$ satisfies the Schauder-type estimate
\begin{equation}
 ||u||_{x^{\mu} C^{2+a, \frac{2+a}{2}}_e(M_T)} \leq K ||f||_{x^{\mu} C^{a, \frac{a}{2}}_e(M_T)}. 
\end{equation}
\end{theorem}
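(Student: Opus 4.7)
The plan is twofold: first construct the candidate solution $u = Hf$ via the time convolution of the heat kernel, then derive the Schauder estimate by reducing to classical parabolic Schauder theory on a Whitney decomposition, in direct analogy with the elliptic $0$-Hölder estimates developed in \cite{Lee, Mazzeo}. For the construction I set $u(\cdot,t) = (Hf)(\cdot,t) = \int_0^t e^{(t-s)L} f(\cdot,s)\,ds$, where $e^{tL} \in \Psi^{2,0}_{e,Heat}$ is the heat operator supplied by Theorem \ref{thm-htkrnl-strc}. When $f \in x^{\mu} C^{\infty}(\overline{M}_T)$, Corollary \ref{cor:polyhomog2} shows $Hf \in x^{\mu} C^{\infty}(\overline{M}_T)$ and that $u$ solves \eqref{inhomo-cauchy} with zero initial data. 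The general case in $x^{\mu} C^{a,a/2}_e(M_T)$ will follow from combining density of smooth functions with the a priori Schauder estimate, which is where the essential work lies.

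For the estimate I would cover $M$ by a Whitney decomposition $\{B_i\}$ of balls of radius $x_i/2$ centered at $(x_i, y_i)$, and let $\psi_i : B \to B_i$ denote the standard affine map from a fixed reference ball. Under $\psi_i$, the $0$-vector fields $x \partial_x$ and $x \partial_{y^a}$ pull back to ordinary derivatives of uniformly bounded length, so $\psi_i^* L$ is a uniformly elliptic second order operator on $B \times [0,T)$ whose coefficients have $C^{a,a/2}$ norms bounded independently of $i$. The classical parabolic interior Schauder estimate with zero initial data \cite[Theorem 8.11.1]{Krylov} applied to $\psi_i^* u$ then yields
\[
\|\psi_i^* u\|_{C^{2+a,(2+a)/2}(B\times [0,T))} \leq K_0 \|\psi_i^* f\|_{C^{a,a/2}(B\times [0,T))},
\]
with $K_0$ independent of $i$ and bounded as $T \to 0$. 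Multiplying by $x_i^{-\mu}$, using that $x \asymp x_i$ on $B_i$, and taking the supremum over the uniformly locally finite cover, the individual estimates assemble into the global weighted bound of the statement. Density of smooth compactly supported functions then promotes the solution constructed above to arbitrary $f$ in the target space.

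The main obstacle is verifying that the Schauder constants produced by the rescaling are genuinely uniform across Whitney cubes and that the weight $x^{\mu}$ commutes cleanly with the scaling. The prefactor $(x+x')^a$ built into the definition of the $0$-Hölder seminorm is precisely what guarantees this uniformity, since on each $B_i$ the function $x$ varies within a uniformly bounded ratio, and multiplication by $x^{\mu}$ therefore produces only an $i$-independent distortion. Preserving the zero initial condition under the rescaling is immediate, since $\psi_i$ does not mix time and space so $\psi_i^* u$ also vanishes at $t=0$. Finally, the claim that the Schauder constant $K$ remains bounded as $T \to 0$ is inherited from the corresponding feature of the classical zero-initial-data parabolic estimate, in which $K$ only degenerates as $T \to \infty$.
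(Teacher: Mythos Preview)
Your overall architecture is sensible, but there is a genuine gap in the Schauder estimate step. The classical parabolic \emph{interior} estimate on a ball $B'\subset\subset B$ has the form
\[
\|\psi_i^* u\|_{C^{2+a,(2+a)/2}(B'\times[0,T))}\leq K_0\Big(\|\psi_i^* f\|_{C^{a,a/2}(B\times[0,T))}+\|\psi_i^* u\|_{L^\infty(B\times[0,T))}\Big),
\]
with an unavoidable $\|u\|_{L^\infty}$ term on the right: you cannot control the solution on a Whitney ball using only the inhomogeneity on that same ball, since information propagates in from neighbouring balls. Your stated inequality simply drops this term. To close the argument along your lines you would first need a global a~priori bound $\|u\|_{x^\mu L^\infty(M_T)}\leq C\|f\|_{x^\mu L^\infty(M_T)}$ (say via a maximum principle or via off-diagonal heat kernel decay), and this is a nontrivial ingredient you have not supplied.

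The paper avoids this issue by working directly with the Duhamel representation and splitting the heat kernel itself rather than the manifold. It writes $h=h_1+h_2$ with $h_1$ supported in a tubular neighbourhood of the blown-up diagonal $B_{00,2}$ and $h_2$ supported away from it. The piece $u_1=H_1 f$ is genuinely local: after the Whitney rescaling $\psi_i$, the kernel $h_1$ pulls back to something that looks like a standard Euclidean heat kernel restricted near the diagonal, and the classical potential-theoretic Schauder bounds (as in Ladyzhenskaja) apply directly, with no stray $\|u\|_{L^\infty}$ term because one is estimating an explicit integral operator. The remaining piece $u_2=H_2 f$ is exactly the ``influence from outside'' your local argument misses; the paper controls it by exploiting the polyhomogeneous structure of $h_2$, which vanishes to infinite order at $B_{10,0},B_{01,0},B_{00,1},B_{00,2}$ and is smooth up to $B_{11,0}$, so that all $0$-derivatives of $u_2$ can be bounded by direct estimation in the coordinate charts \eqref{coord-near-lc}, \eqref{coord-near-tff}. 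Your handling of the weight (comparing $x$ to $x_i$ on each Whitney ball) is fine and is essentially equivalent to the paper's conjugation $x^{-\mu}Lx^{\mu}$; the substantive difference is the missing global-versus-local control.
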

\begin{proof}
By Duhamel's principle, a solution to \eqref{inhomo-cauchy} is given by
\begin{equation} \label{eqn:duhamel}
u(\zeta, t) = \int_0^t \int_M h(\zeta, \zeta', t-t') f(\zeta', t') \dvol(\zeta') dt',
\end{equation}
where $h$ is the heat kernel of the heat operator $e^{tL}$, where $e^{tL} \in \Psi^{2,0}_{e, Heat}.$

We now discuss the estimates.  The case for nonzero weight $\mu$ reduces to the unweighted estimate, as to solve the inhomogeneous problem with $u \in x^{\mu} C^{2+a,\frac{2+a}{2}}_e(M)$ amounts to solving
\[ (\partial_t - x^{-\mu} L x^{\mu}) u'(\zeta,t) = f'(\zeta,t) \]
for with $u'$ and $f'$ in appropriate unweighted spaces.  The kernel of the conjugated operator $x^{-\mu} L x^{\mu}$ has precisely the same asymptotics as the kernel of $L$, as may be seen by working in the coordinate systems \eqref{coord-near-lc} and \eqref{coord-near-tff}.  So it suffices to check the mapping properties when $\mu = 0$.

The strategy is now to cut up the space $HM_e^2$.  Consider a function $\phi$ equal to one in a tubular neighbourhood of $B_{00,2}$ and vanishing outside a slightly larger tubular neighbourhood.  We may write the heat kernel as 
\[ h = h_1 + h_2 := \phi h + (1-\phi) h. \]
To prove \eqref{est:schauder}, it will suffice to estimate both 
\[ u_1(\zeta, t) = \int_0^t \int_M h_1(\zeta, \zeta', t-t') f(\zeta', t') \dvol(\zeta') dt' \]
and
\[ u_2(\zeta, t) = \int_0^t \int_M h_2(\zeta, \zeta', t-t') f(\zeta', t') \dvol(\zeta') dt' \]

Regarding the estimate for $u_2$, we view $h_2$ as a polyhomogeneous distribution vanishing to infinite order at $B_{10,0}, B_{01,0}, B_{00,1}, B_{00,2}$ and smooth up to $B_{11,0}$.  The estimates are then checked in each of the coordinate systems  \eqref{coord-near-lc} and \eqref{coord-near-tff} in a lengthy but straightforward manner.  For example, under the coordinate change \eqref{coord-near-tff}, the $0$-vector field $x \partial_x$ lifts to $(\tau^{-1} + S) \partial_S$, consequently
\begin{align*}
(x \partial x)^2 u(x, y, t) &= \int_0^t \int_M (x \partial x)^2 h_2(\zeta, \zeta', t-t') f(\zeta', t') \dvol(\zeta') dt' \\
&= \int_0^t \int ((\tau^{-1} + S) \partial_S)^2 h_2(S, U, x', y', t-\tau) f(x', y', \tau) \dvol(x',y') d\tau.
\end{align*}
We can now estimate the $L^{\infty}$ norm and H\"older seminorm of $(x \partial x)^2 u(x, y, t)$ using the fact that $h_2$ is smooth and vanishes to infinite order in $\tau$ to absorb the apparent singular factor of $\tau$.  The same is true for the other derivatives.  We omit the precise estimation but refer the reader to \cite{BahuaudDrydenVertman} for similar estimation.

Regarding the estimate for $u_1$, recall the Whitney decomposition outlined in Section \ref{section:functionspaces}.  Let $\psi_i: B_T \rightarrow (B_i)_T$ be an affine map taking a `standard' parabolic cylinder over the ball with centre $(1,0)$ and radius 1 in the right half plane model of hyperbolic space to the ball with centre $(x_i, y_i)$ and radius $\frac{1}{2} x_i$:
\[ \psi_i( v, w, t) = ( x_i v, y_i + x_i w, t). \]

Consider the pullback of $u_1$ under $\psi_i$:
\begin{align*}
(\psi_i^*u)(v,w,t) &= u_1( x_i v, y_i + x_i w, t ) \\
&= \int_0^t \int h_1 \left( \frac{x_i v-x'}{x'(t-t')^{1/2}}, \frac{y_i + x_i w -y'}{x'(t-t')^{1/2}}, x', y', t-t'\right) \\
& \hspace{1 in} \cdot f(x',y',t') (x')^{-n-1} \dvol(x',y') dt' \\
&= \int_0^t \int h_1 \left( \frac{v-\vt}{\vt(t-t')^{1/2}}, \frac{w-\wt}{\vt(t-t')^{1/2}}, x_i \vt, y_i + x_i \wt, t-t'\right) \\
& \hspace{1 in} \cdot f(x_i \vt, y_i + x_i \wt,t') \vt^{-n-1} \dvol(\vt,\wt) dt'\\
&= \int_0^t \int h_1 \left( \frac{v-\vt}{\vt(t-t')^{1/2}}, \frac{w-\wt}{\vt(t-t')^{1/2}},  x_i \vt, y_i + x_i \wt, t-t'\right) \\
& \hspace{1 in} \cdot (\psi_i^* f)(\vt,\wt,t') \vt^{-n-1} \dvol(\vt,\wt) dt'
\end{align*}

These charts essentially provide the bookkeeping for rescaling of the equation to a fixed parabolic cylinder in $\bR^{n+1} \times [0, \infty)$.  Further, uniformly degenerate vector fields pull back under $\psi_i$ to differential operators with uniformly bounded coefficients on the standard cylinder, and in particular a uniformly degenerate parabolic operator is pulled back to a uniformly parabolic operator.  This reduces the estimate to the `classical' parabolic case, which may be found in \cite{Ladyzenskaja}.

\end{proof}

\bibliographystyle{amsplain}
\bibliography{rflow-cc}
\end{document}